\numberwithin{equation}{section}
\theoremstyle{plain}
\newtheorem{theorem}[equation]{Theorem}
\newtheorem{corollary}[equation]{Corollary}
\newtheorem{lemma}[equation]{Lemma}
\newtheorem{proposition}[equation]{Proposition}
\theoremstyle{definition}
\newtheorem{definition}[equation]{Definition}
\newtheorem{remark}[equation]{Remark}
\newcommand{\R}{{\mathbb R}}
\newcommand{\N}{{\mathbb N}}
\newcommand{\Om}{\Omega}
\providecommand{\vint}[1]{\mathchoice
	{\mathop{\vrule width 5pt height 3 pt depth -2.5pt
			\kern -9pt \kern 1pt\intop}\nolimits_{\kern -5pt{#1}}}
	{\mathop{\vrule width 5pt height 3 pt depth -2.6pt
			\kern -6pt \intop}\nolimits_{\kern -3pt{#1}}}
	{\mathop{\vrule width 5pt height 3 pt depth -2.6pt
			\kern -6pt \intop}\nolimits_{\kern -3pt{#1}}}
	{\mathop{\vrule width 5pt height 3 pt depth -2.6pt
			\kern -6pt \intop}\nolimits_{\kern -3pt{#1}}}}
\newcommand{\eps}{\varepsilon}
\newcommand{\loc}{\mathrm{loc}}
\newcommand{\BV}{\mathrm{BV}}
\newcommand{\liploc}{\mathrm{Lip}_{\mathrm{loc}}}
\newcommand{\ch}{\text{\raise 1.3pt \hbox{$\chi$}\kern-0.2pt}}
\newcommand{\mres}{\mathbin{\vrule height 2ex depth 2.2pt width
		0.12ex\vrule height -0.3ex depth 2.2pt width .5ex}}
\DeclareMathOperator{\Mod}{Mod}
\DeclareMathOperator{\capa}{Cap}
\DeclareMathOperator{\rcapa}{cap}
\DeclareMathOperator{\diam}{diam}
\begin{document}
\title[]{Capacitary density and removable sets\\
	for Newton--Sobolev functions
	in metric spaces
}
\author{Panu Lahti}
\address{Panu Lahti,  Academy of Mathematics and Systems Science, Chinese Academy of Sciences,
	Beijing 100190, PR China, {\tt panulahti@amss.ac.cn}}

\subjclass[2020]{30L99, 46E36, 31C40}
\keywords{Metric measure space, Newton-Sobolev function, removability, Poincar\'e inequality,
	capacitary density, Federer's characterization of sets of finite perimeter}

\begin{abstract}
	In a complete metric space equipped with a doubling measure and
	supporting a $(1,1)$--Poincar\'e inequality, we show that
	every set satisfying a suitable capacitary density condition is removable for 
	Newton--Sobolev functions.
\end{abstract}

\date{\today}
\maketitle

\section{Introduction}

Removable sets for Sobolev functions is a classical topic that has been widely studied over several decades,
see e.g. \cite{BBL,Hed,KaWu,JoSm,Kols,Nta,Vai,Wu}.
A closed set $A\subset \R^n$ with zero Lebesgue measure is said to be removable for
the Sobolev class $W^{1,p}$ if
$W^{1,p}(\R^n\setminus A)=W^{1,p}(\R^n)$
as sets. We always consider $1\le p<\infty$.
Removable sets have been characterized by various conditions, which
however tend to be difficult to apply in practice,
e.g. as null sets for condenser capacities; see the discussion
in Koskela \cite[p. 292]{Kos}.
Indeed, the paper \cite{Kos} gave a more concrete sufficient condition for removability, proving that
so-called $p$-porous subsets of a hyperplane are removable for $W^{1,p}$.
It was also shown there that removability is equivalent with $\R^n\setminus A$ supporting a
$(1,p)$--Poincar\'e inequality; we can express the latter by saying that $A$ is removable for the 
$(1,p)$--Poincar\'e inequality.

In Koskela--Shanmugalingam--Tuominen \cite{KST} this type of removability result
was extended to Ahlfors regular metric measure spaces $(X,d,\mu)$
supporting a $(1,p)$--Poincar\'e inequality, with $1<p<\infty$.
We will give definitions in Section \ref{sec:prelis}.
Given $t>1$, one says that a set $A\subset X$ is $t$--porous if for every $x\in A$ there
exist arbitrarily small
radii $r>0$ such that
\[
A\cap (B(x,tr)\setminus B(x,r))=\emptyset.
\]
In \cite[Theorem A]{KST}, compact $t$--porous sets for sufficiently large $t$
were shown to be removable for the $(1,p)$--Poincar\'e inequality, $1<p<\infty$.
In \cite{BaKo} and \cite{KaWu}, suitable porosity conditions were also shown to be sufficient for removability for quasiconformal mappings.
The above $t$-porosity condition is a very concrete, but also quite strong requirement,
as very large annuli are required to have
empty intersection with the set $A$.

In the current paper, we give the following removability result
for Newton--Sobolev functions $N^{1,p}$,
which are an extension of Sobolev functions to metric spaces. In this result, porosity
is replaced by a capacitary density condition, which is also a very concrete, but much less restrictive
assumption.

\begin{theorem}\label{thm:removability theorem}
	Suppose $(X,d,\mu)$ is a complete metric space equipped with the doubling measure $\mu$
	and supporting a $(1,1)$-Poincar\'e inequality.
	Suppose $A\subset X$ is such that
	\begin{equation}\label{eq:liminf intro}
	\liminf_{r\to 0}\frac{\rcapa_1(A\cap B(x,r),B(x,2r))}{\rcapa_1(B(x,r),B(x,2r))}
	<c_*\quad\textrm{for 1-q.e. }x\in X
	\end{equation}
	and for a constant $c_*>0$ depending only on the doubling constant and the constants in the Poincar\'e
	inequality.
	Then $A$ is removable for $N^{1,p}(X)$ for all $1\le p<\infty$,
	and $X\setminus A$ supports a $(1,1)$--Poincar\'e inequality.
\end{theorem}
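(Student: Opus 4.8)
The plan is to deduce both conclusions from the single statement that $X\setminus A$ supports a $(1,1)$--Poincar\'e inequality, which I would obtain by transporting the Poincar\'e inequality of $X$ onto $X\setminus A$ along curves rerouted around $A$. As a first step I would show $\mu(A)=0$: by the $(1,1)$--Poincar\'e inequality and doubling one has $\rcapa_1(B(x,r),B(x,2r))\simeq\mu(B(x,r))/r$, and the Sobolev inequality for functions in $N^{1,1}_0(B(x,2r))$ gives $\rcapa_1(E,B(x,2r))\gtrsim\mu(E)/r$ for $E\subseteq B(x,2r)$, so the quotient in \eqref{eq:liminf intro} is bounded below by $c\,\mu(A\cap B(x,r))/\mu(B(x,r))$ for a structural constant $c>0$. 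Thus $\liminf_{r\to0}\mu(A\cap B(x,r))/\mu(B(x,r))<c_*/c$ at $1$--q.e., and hence $\mu$--a.e., point $x$; if $c_*$ is small enough that $c_*/c<1$, the Lebesgue density theorem forces $\mu(A)=0$.

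The heart of the matter is the $(1,1)$--Poincar\'e inequality for $X\setminus A$. Since $X$ is complete, doubling and supports a $(1,1)$--Poincar\'e inequality, it carries a Semmes pencil of curves: for each pair $x,y$ a probability measure on curves joining them, of length $\lesssim d(x,y)$, whose occupation measure is controlled by $\mu$ through a maximal function. Because $\mu(A)=0$, almost every such curve already meets $A$ only in a length--null set, and what must be arranged is to perturb it — within comparable length and with $\int_\gamma g$ increased by at most a fixed factor — so as to avoid $A$ altogether. The decisive local fact is that if $\rcapa_1(A\cap B(x,r),B(x,2r))<c_*\,\rcapa_1(B(x,r),B(x,2r))$, then, since the $1$--capacity of a condenser equals the $1$--modulus of its family of connecting curves, deleting from that family the curves meeting $A$ still leaves a definite fraction of its modulus, so a curve entering $B(x,r)$ may be rerouted around $A$ inside $B(x,2r)$ at a cost comparable to the replaced piece, provided $c_*$ is small depending only on the doubling and Poincar\'e constants. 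Applying this at the arbitrarily small good radii available at $1$--q.e.\ point — the set where \eqref{eq:liminf intro} fails has $\rcapa_1=0$ and is invisible to the $N^{1,1}$--Poincar\'e inequality — and summing a geometrically shrinking sequence of such reroutings along each curve of the pencil yields a pencil of curves for $X\setminus A$, hence the $(1,1)$--Poincar\'e inequality there. I expect this quantitative rerouting to be the main obstacle: making it go through for a \emph{fixed} threshold $c_*$ depending only on the structural constants, when the hypothesis supplies merely a $\liminf$ rather than genuine smallness of the density. It is also the natural place for Federer's characterization of sets of finite perimeter to enter, describing the measure--theoretic and fine structure of $X\setminus A$ near $A$ and keeping the accumulated rerouting errors summable.

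Finally, to pass to removability, let $u\in N^{1,p}(X\setminus A)$ with $1\le p<\infty$ and let $g$ be its minimal $1$--weak upper gradient; since $\mu(A)=0$, $u\in L^p(X)$ and $g\in L^p(X)$. The $(1,1)$--Poincar\'e inequality on $X\setminus A$ together with doubling furnishes a $1$--quasicontinuous representative $\bar u$ of $u$ on $X$ and the usual telescoping control of $\bar u$ on balls by $\int g$. For $1$--a.e.\ curve $\gamma$ in $X$ one then checks that $\gamma$ meets $A$ in a length--null set, that $\bar u\circ\gamma$ is continuous and locally absolutely continuous on the open set $\gamma^{-1}(X\setminus A)$ with $|(\bar u\circ\gamma)'|\le g\circ\gamma$, and — using the Poincar\'e inequality on $X\setminus A$ to rule out any singular variation of $\bar u\circ\gamma$ carried by the null set $\gamma^{-1}(A)$ — that $\bar u\circ\gamma$ is absolutely continuous on the whole parameter interval. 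Hence $g$ is a $1$--weak, and therefore $p$--weak, upper gradient of $\bar u$ in $X$, so $\bar u\in N^{1,p}(X)$ with the same norm; the reverse inclusion is immediate. This gives removability for $N^{1,p}(X)$ for every $1\le p<\infty$, and by the second paragraph $X\setminus A$ supports a $(1,1)$--Poincar\'e inequality.
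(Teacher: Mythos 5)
Your first step ($\mu(A)=0$) matches the paper's Lemma \ref{lem:zero measure} (up to the Borel-hull device of Lemma \ref{lem:weak Borel regularity}, needed since $A$ need not be measurable), and your overall logical direction -- prove the $(1,1)$--Poincar\'e inequality on $X\setminus A$ first and then deduce $N^{1,p}$-removability from it -- is legitimate in principle, since the second implication is exactly the known equivalence (Theorem \ref{thm:Bjorn removability}, from \cite{BBL}). But the heart of your argument, the direct rerouting/Semmes-pencil construction of the $(1,1)$--Poincar\'e inequality on $X\setminus A$, is a genuine gap, and you say so yourself (``I expect this quantitative rerouting to be the main obstacle''). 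The hypothesis only gives, at $1$-q.e.\ point, \emph{some} arbitrarily small radii at which $\rcapa_1(A\cap B(x,r),B(x,2r))$ is a small but nonzero fraction of $\rcapa_1(B(x,r),B(x,2r))$; at all intermediate scales $A$ may be capacitarily dense, and a small capacity fraction gives neither an empty annulus nor any quantitative corridor around $A$. The modulus/capacity comparison you invoke is a family-level statement: knowing that the family of curves avoiding $A$ in an annulus retains a definite fraction of modulus does not produce, for a \emph{given} curve of the pencil that hits $A$, a replacement subcurve of comparable length with $\int_\gamma g$ increased by a bounded factor; the chain/pencil constructions in \cite{Kos,KST} that do this rely precisely on the porosity assumption (large empty annuli) that the present theorem is designed to dispense with. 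So the decisive step is asserted, not proved, and there is no indication of how to prove it with a fixed threshold $c_*$ and a mere $\liminf$.

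The paper's mechanism is entirely different and is where the real work lies: given $u\in N^{1,1}(X\setminus A)$ (first bounded, and with $X\setminus A$ replaced by the $1$-finely open set $X\setminus\overline{A}^1$), one multiplies by a cutoff vanishing on a small quasiopen exceptional set, shows via the coarea formula that the resulting function is BV on all of $X$, and -- crucially -- shows that its variation measure charges no mass on $A$. Both of these rest on Proposition \ref{prop:finite perimeter removability}: if $\mathcal H(\partial^*E\setminus A)<\infty$ then $\mathcal H(\Sigma_{\beta}E\cap A)=0$, which is proved by a delicate density argument at points of the \emph{strong} boundary and feeds into the new Federer-type characterization of sets of finite perimeter (Theorem \ref{thm:new Federer}) through \eqref{eq:perimeter and strong boundary}. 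As the paper's Section 5 discussion explains, with the ordinary Federer characterization (measure-theoretic boundary only) this scheme would require the much stronger condition \eqref{eq:stronger density}, which forces $\capa_1(A)=0$; handling the $\liminf$ condition with a fixed small $c_*$ is exactly what the strong-boundary theorem buys, and your proposal contains no substitute for it. Note also that the paper's logical order is the reverse of yours: isometric removability for $N^{1,1}$ is proved first, and the $(1,1)$--Poincar\'e inequality for $X\setminus A$ is then \emph{deduced} from it via Theorem \ref{thm:equivalence}, rather than being the input. Your closing sketch of the implication ``Poincar\'e on $X\setminus A$ $\Rightarrow$ $N^{1,p}$-removability'' is also far more delicate than indicated (it is a theorem of \cite{BBL}), but citing that result would be acceptable; the unproved Poincar\'e inequality is the gap that matters.
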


We will observe that if $A$ satisfies the aforementioned $t$-porosity with large enough
$t$, then it also satisfies \eqref{eq:liminf intro},
so in particular Theorem \ref{thm:removability theorem} essentially extends the results of
\cite{KST} to the case $p=1$.
But of course \eqref{eq:liminf intro} is satisfied also by many non-porous sets.
We also prove a version of Theorem \ref{thm:removability theorem} for the more general class of
functions of bounded variation (BV) in Corollary \ref{cor:BV}.
In fact, the proof of Theorem \ref{thm:removability theorem} also relies on BV theory and specifically on a
new Federer-type characterization of sets of finite perimeter proved in \cite{L-newFed}.

\section{Notation and definitions}\label{sec:prelis}

In this section we introduce the basic notation, definitions,
and assumptions that are employed in the paper.

\subsection{Measure theory and Newton--Sobolev functions}
Throughout this paper, $(X,d,\mu)$ is a complete metric space that is equip\-ped
with a metric $d$ and a Borel regular outer measure $\mu$ that satisfies
a doubling property, meaning that
there exists a constant $C_d\ge 1$ such that
\[
0<\mu(B(x,2r))\le C_d\mu(B(x,r))<\infty
\]
for every ball $B(x,r):=\{y\in X\colon d(y,x)<r\}$, with $x\in X$ and $r>0$.
Since some conditions that we consider hold only for balls with radius less than $\diam X$,
for simplicity we assume that $\diam X>0$, that is, $X$ consists of at least $2$ points.
When a property holds outside a set of $\mu$-measure zero, we say that it holds at
almost every (a.e.) point.
By a measurable set we always mean a $\mu$-measurable set.
We say that $X$, or $\mu$, is Ahlfors $Q$-regular with $Q>1$ if there
exists a constant $C_A\ge 1$ such that
for all $x\in X$ and $0<r<2\diam X$, we have
\begin{equation}\label{eq:Ahlfors Q regular}
\frac{1}{C_A}r^Q \le \mu(B(x,r))\le C_A r^Q.
\end{equation}
We will never assume Ahlfors $Q$-regularity in this paper, but we give the definition for reference,
since it is assumed in certain results in the literature.

All functions defined on $X$ or its subsets will take values in $[-\infty,\infty]$.
A function $u$ defined on a measurable set $H\subset X$
is said to be in the class $L^1_{\loc}(H)$ if for every $x\in H$ there exists
$r>0$ such that $u\in L^1(B(x,r)\cap H)$.
Other local spaces of functions are defined analogously.

By a curve we mean a rectifiable continuous mapping from a compact interval of the real line into $X$.
The length of a curve $\gamma$
is denoted by $\ell_{\gamma}$. We will assume every curve $\gamma$ to be parametrized
by arc-length (see e.g. \cite[Theorem 3.2]{Haj}), so that the curve is $\gamma\colon [0,\ell_{\gamma}]\to X$.
A nonnegative Borel function $g$ on $X$ is said to be an upper gradient 
of a function $u$ in a set $H\subset X$ if for all nonconstant curves $\gamma$ in $H$, we have
\begin{equation}\label{eq:definition of upper gradient}
	|u(x)-u(y)|\le \int_{\gamma} g\,ds:=\int_0^{\ell_{\gamma}} g(\gamma(s))\,ds,
\end{equation}
where $x$ and $y$ are the end points of $\gamma$.
We interpret $\infty-\infty=\infty$ and $-\infty-(-\infty)=-\infty$.
We also express \eqref{eq:definition of upper gradient} by saying that the pair $(u,g)$ satisfies
the upper gradient inequality on the curve $\gamma$.
Upper gradients were originally introduced in \cite{HK}.

We always consider $1\le p <\infty$, with a heavy focus on the case $p=1$.
The $p$-modulus of a family of curves $\Gamma$ is defined by
\[
\Mod_{p}(\Gamma):=\inf\int_{X}\rho^p\, d\mu,
\]
where the infimum is taken over all nonnegative Borel functions $\rho$ on $X$
such that $\int_{\gamma}\rho\,ds\ge 1$ for every curve $\gamma\in\Gamma$.
A property is said to hold for $p$-almost every curve
if it fails only for a curve family with zero $p$-modulus. 
If $g$ is a nonnegative $\mu$-measurable function on a $\mu$-measurable set $H\subset X$ and
the pair $(u,g)$ satisfies
the upper gradient inequality on $p$-a.e. curve $\gamma$ in $H$,
then we say that $g$ is a $p$-weak upper gradient of $u$ in $H$.

Denote the characteristic function of a set $A\subset X$ by $\ch_A$.
If $N\subset X$ with $\mu(N)=0$, then by using the Borel function
$\rho:=\infty \ch_{N'}$ for a Borel set $N'\supset N$ with $\mu(N')=0$,
we find that
\begin{equation}\label{eq:mu null set and curves}
\Mod_p(\{\gamma\colon [0,\ell_{\gamma}]\to X\colon \mathcal L^1(\gamma^{-1}(N))>0\})=0.
\end{equation}

Given a $\mu$-measurable set $H\subset X$, we let
\[
\Vert u\Vert_{N^{1,p}(H)}:=\Vert u\Vert_{L^p(H)}+\inf \Vert g\Vert_{L^p(H)},
\]
where the infimum is taken over all $p$-weak upper gradients $g$ of $u$ in $H$.
Then we define the Newton-Sobolev space
\[
N^{1,p}(H):=\{u:\|u\|_{N^{1,p}(H)}<\infty\}.
\]
This space was first introduced in \cite{Shan}.
If $H$ is an open subset of $\R^n$, then $u\in L^p(H)$ is in the classical Sobolev space $W^{1,p}(H)$
if and only if a suitable pointwise representative of $u$ is in $N^{1,p}(H)$, and then the
quantity $\Vert u\Vert_{N^{1,p}(H)}$
agrees with the classical Sobolev norm, see \cite[Theorem A.2, Corollary A.4]{BB}.
We understand Newton-Sobolev functions to be defined at every $x\in H$,
because this is needed for upper gradients to makes sense.
It is known that for every $u\in N_{\loc}^{1,p}(H)$ there exists a minimal $p$-weak
upper gradient $g_{u}$ of $u$ in $H$, satisfying $g_{u}\le g$ 
a.e. in $H$ for every $p$-weak upper gradient $g\in L_{\loc}^{p}(H)$
of $u$ in $H$, see \cite[Theorem 2.25]{BB}.
Usually we use the notation $g_u$, but since there can also be a dependence on the set $H$,
we sometimes use the notation $g_{u,H}$.

For truncations $u_M:=\max\{-M,\min\{M,u\}\}$, we clearly have
\begin{equation}\label{eq:truncation}
g_{u_M}\le g_u\quad \textrm{a.e.}
\end{equation}
For two functions $u,v\in N_{\loc}^{1,p}(H)\cap L^{\infty}(H)$, for the minimal $p$-weak
upper gradients we have the Leibniz rule
\begin{equation}\label{eq:Leibniz rule}
	g_{uv}\le |u|g_v+|v|g_u\quad\textrm{a.e. in }H;
\end{equation}
see \cite[Theorem 2.15]{BB}.

For any open set $\Om\subset X$,
the space of Newton-Sobolev functions with zero boundary values is defined by
\[
	N_0^{1,p}(\Om):=\{u|_{\Om}:\,u\in N^{1,p}(X)\textrm{ with }u=0\textrm { in }X\setminus \Om\}.
\]
This space can be understood to be a subspace of $N^{1,p}(X)$.

For any set $A\subset X$ and $0<R<\infty$, the Hausdorff content
of codimension one is defined by
\begin{equation}\label{eq:Hausdorff content}
\mathcal{H}_{R}(A):=\inf\left\{ \sum_{j}
\frac{\mu(B(x_{j},r_{j}))}{r_{j}}:\,A\subset\bigcup_{j}B(x_{j},r_{j}),\,r_{j}\le R\right\},
\end{equation}
where we take the infimum over finite and countable coverings.
We also define this for $R=\infty$, and there we only require $r_j<\infty$.
The codimension one Hausdorff measure of $A\subset X$ is then defined by
\[
\mathcal{H}(A):=\lim_{R\rightarrow 0}\mathcal{H}_{R}(A).
\]

We will assume throughout the paper that $X$ supports a $(1,1)$-Poincar\'e inequality,
meaning that there exist constants $C_P\ge 1$ and $\lambda \ge 1$ such that for every
ball $B(x,r)$, every $u\in L^1(X)$,
and every upper gradient $g$ of $u$, we have
\begin{equation}\label{eq:poincare inequality}
\vint{B(x,r)}|u-u_{B(x,r)}|\, d\mu 
\le C_P r\vint{B(x,\lambda r)}g\,d\mu,
\end{equation}
where 
\[
u_{B(x,r)}:=\vint{B(x,r)}u\,d\mu :=\frac 1{\mu(B(x,r))}\int_{B(x,r)}u\,d\mu.
\]

Given a set $A\subset X$ with $\mu(A)=0$, we say that the space $X\setminus A=(X\setminus A,d,\mu)$
supports a $(1,p)$--Poincar\'e inequality if
there exist constants $C_P'\ge 1$ and $\lambda' \ge 1$ such that for every
$x\in X\setminus A$, $r>0$, $u\in L^1(X)$,
and every upper gradient $g$ of $u$ in $X\setminus A$, we have
\begin{equation}\label{eq:poincare inequality minus A}
	\vint{B(x,r)}|u-u_{B(x,r)}|\, d\mu 
	\le C_P' r \left(\vint{B(x,\lambda' r)}g^p\,d\mu\right)^{1/p}.
\end{equation}
Note that by H\"older's inequality, the $(1,1)$--Poincar\'e implies the $(1,p)$--Poincar\'e inequality
for every $1<p<\infty$.

\subsection{Functions of bounded variation}

Next we introduce functions
of bounded variation on metric spaces, following Miranda Jr.
\cite{M}.
Given an open set $\Om\subset X$ and a function $u\in L^1_{\loc}(\Om)$,
we define the total variation of $u$ in $\Om$ by
\[
	\Vert Du\Vert (\Om):=\inf\left\{\liminf_{i\to\infty}\int_\Om g_{u_i}\,d\mu\colon \, u_i\in N^{1,1}_{\loc}(\Om),\, u_i\to u\textrm{ in } L^1_{\loc}(\Om)\right\},
\]
where each $g_{u_i}$ is the minimal $1$-weak upper gradient of $u_i$ in $\Om$.
We say that a function $u\in L^1(\Om)$ is of bounded variation, 
and denote $u\in\BV(\Om)$, if $\|Du\|(\Om)<\infty$.
For an arbitrary set $A\subset X$, we define
\[
\Vert Du\Vert (A):=\inf\{\Vert Du\Vert (W)\colon A\subset W,\,W\subset X
\text{ is open}\}.
\]
In \cite{M}, pointwise Lipschitz constants were used in place of weak upper gradients, but the theory
can be developed similarly with either definition. In the literature, it
is sometimes also required that $u_i\in \liploc(\Om)$ instead of $u_i\in N^{1,1}_{\loc}(\Om)$,
but for us this does not make a difference, since functions in
$N^{1,1}_{\loc}(\Om)$ can be approximated by functions in $\liploc(\Om)$ in the $\Vert \cdot\Vert_{N^{1,1}(\Om)}$-seminorm,
see \cite[Theorem 5.47]{BB}.

If $u\in L^1_{\loc}(\Om)$ and $\Vert Du\Vert(\Omega)<\infty$,
then $\|Du\|$ is
a Borel regular outer measure on $\Omega$ by \cite[Theorem 3.4]{M}.
A $\mu$-measurable set $E\subset X$ is said to be of finite perimeter if $\|D\ch_E\|(X)<\infty$, where $\ch_E$ is the characteristic function of $E$.
The perimeter of $E$ in $\Omega$ is also denoted by
\[
P(E,\Omega):=\|D\ch_E\|(\Omega).
\]

Applying the Poincar\'e inequality \eqref{eq:poincare inequality} to sequences of approximating
$N^{1,1}_{\loc}$-functions in the definition of the total variation, we get
the following $\BV$ version:
for every ball $B(x,r)$ and every 
$u\in L^1_{\loc}(X)$, we have
\[
\int_{B(x,r)}|u-u_{B(x,r)}|\,d\mu
\le C_P r \Vert Du\Vert (B(x,\lambda r)).
\]
For a $\mu$-measurable set $E\subset X$, by considering the two cases
$(\ch_E)_{B(x,r)}\le 1/2$ and $(\ch_E)_{B(x,r)}\ge 1/2$, from the above we get
the relative isoperimetric inequality
\begin{equation}\label{eq:relative isoperimetric inequality}
	\min\{\mu(B(x,r)\cap E),\,\mu(B(x,r)\setminus E)\}\le 2 C_P rP(E,B(x,\lambda r)).
\end{equation}

We define the lower and upper densities of a set $E\subset X$ at a point $x\in X$ as follows:
\[
\theta_*(E,x):=\liminf_{r\to 0}\frac{\mu(B(x,r)\cap E)}{\mu(B(x,r))}
\quad\textrm{and}\quad 
\theta^*(E,x):=\limsup_{r\to 0}\frac{\mu(B(x,r)\cap E)}{\mu(B(x,r))}.
\]
The measure-theoretic interior of $E\subset X$ is defined by
\begin{equation}\label{eq:measure theoretic interior}
I_E:=
\left\{x\in X\colon\theta^*(X\setminus E,x)=0\right\},
\end{equation}
and the measure-theoretic exterior by
\[
O_E:=
\left\{x\in X\colon\theta^*(E,x)=0\right\}.
\]
The measure-theoretic boundary is defined as
\begin{equation}\label{eq:measure theoretic boundary}
\partial^{*}E:=\left\{x\in X\colon \theta^*(E,x)>0
\textrm{ and }
\theta^*(X\setminus E,x)>0\right\}.
\end{equation}
It is straightforward to show that these are all Borel sets;
note also that the space $X$ is always partitioned into the disjoint sets
$I_E$, $O_E$, and $\partial^*E$.
We also let
\[
	E_{b}:=\{x\in X\colon \theta_{*}(E,x)\ge b\},\quad b> 0.
\]
The \emph{strong boundary} $\Sigma_{b}E$, for $0<b\le 1/2$,
is defined as $\Sigma_{b}E:=E_{b}\cap (X\setminus E)_{b}$.

For an open set $\Omega\subset X$ and a $\mu$-measurable set $E\subset X$ with $P(E,\Omega)<\infty$, we know that
\begin{equation}\label{eq:measure theoretic and strong boundary}
	\mathcal H((\partial^*E\setminus \Sigma_{\gamma}E)\cap \Om)=0
\end{equation}
for a number $\gamma=\gamma(C_d,C_P,\lambda)>0$, see \cite[Theorem 5.3, Theorem 5.4]{A1}.
We then also know that for any Borel set $A\subset\Omega$,
\begin{equation}\label{eq:def of theta}
	P(E,A)=\int_{\partial^*E\cap A}\theta_E\,d\mathcal H
	=\int_{\Sigma_{\gamma}E\cap A}\theta_E\,d\mathcal H,
\end{equation}
where
$\theta_E\colon \Om\to [\alpha,C_d]$ with $\alpha=\alpha(C_d,C_P,\lambda)>0$,
see \cite[Theorem 5.3]{A1} 
and \cite[Theorem 4.6]{AMP}.
In particular, $P(E,\Om)<\infty$ implies that $\mathcal H(\partial^*E\cap\Om)<\infty$.
Federer's characterization of sets of finite perimeter states that the converse is also true.
That is, if $E\subset X$ is a $\mu$-measurable set such that $\mathcal H(\partial^*E\cap \Om)<\infty$,
then $P(E,\Om)<\infty$,
see \cite[Theorem 1.1]{L-Fedchar}.
See also Federer \cite[Section 4.5.11]{Fed} for the original Euclidean result.

The strong boundary can also be used to characterize sets of finite perimeter, as follows.

\begin{theorem}[{\cite[Theorem 1.1]{L-newFed}}]\label{thm:new Federer}
	Let $\Om\subset X$ be an open set and let $E\subset X$ be a $\mu$-measurable set
	with $\mathcal H(\Sigma_{\beta} E\cap \Om)<\infty$,
	where $0<\beta\le 1/2$ only depends on the doubling constant of the measure
	and the constants in the Poincar\'e inequality. Then $P(E,\Om)<\infty$.
\end{theorem}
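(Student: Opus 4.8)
The plan is to deduce the theorem from the quantitative perimeter bound
$P(E,\Om)\le C\,\cH(\Sigma_\beta E\cap\Om)$ with $C=C(C_d,C_P,\lambda)$, and to prove that bound by a covering argument anchored at an efficient cover of the strong boundary. First I would localize: a complete doubling metric measure space is proper, so $\Om$ can be exhausted by relatively compact open sets $\Om'\Subset\Om$, and by a standard exhaustion-and-patching argument (the patching error on thin shells being controlled by $\|\ch_E\|_{L^1}$ via the Leibniz rule \eqref{eq:Leibniz rule}) it suffices to bound $\|D\ch_E\|(\Om')$ for each such $\Om'$; replacing $\Om$ by $\Om'$ we may assume $\Om$ is bounded. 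Letting $u_t$ denote the discrete convolution of $\ch_E$ at scale $t$ --- a locally Lipschitz function with $|u_t|\le 1$, $u_t\to\ch_E$ in $L^1_{\loc}$, and $\|Du_t\|(\Om)\le\int_\Om g_{u_t}\,d\mu$ --- lower semicontinuity of the total variation under $L^1_{\loc}$-convergence reduces everything to a uniform bound on $\int_\Om g_{u_t}\,d\mu$ as $t\to0$.

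Next I would localize the gradient. For the discrete convolution at scale $t$ built from a maximal $t$-separated net $\{x_k\}$, balls $B_k=B(x_k,t)$, and a subordinate Lipschitz partition of unity of bounded overlap, the minimal upper gradient $g_{u_t}$ is bounded on each $B_k$ by $Ct^{-1}$ times the oscillation of the averages $(\ch_E)_{B_i}=\mu(E\cap B_i)/\mu(B_i)$ over the neighbouring balls $B_i$ of $B_k$. By the doubling property this oscillation is $\le C\delta$ whenever all nearby averages are $<\delta$ or all are $>1-\delta$, so, up to the contribution of such ``interior'' balls, $\int_\Om g_{u_t}\,d\mu$ is comparable to $t^{-1}\sum_{k\in\mathcal B_t}\mu(B_k)$, where $\mathcal B_t$ collects the balls on which the local density of $E$ is $\delta$-intermediate.

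The core of the argument is to charge $\mathcal B_t$ to $\Sigma_\beta E$. Fix a cover $\Sigma_\beta E\cap\Om\subset\bigcup_l B(z_l,r_l)$ with $r_l\le t$ and $\sum_l\mu(B(z_l,r_l))/r_l\le 2\,\cH(\Sigma_\beta E\cap\Om)$. To each $B_k\in\mathcal B_t$ I would run a stopping-time descent through the scales $2^{-j}t$: either the density of $E$ stays $\beta$-intermediate around some point of $B_k$ all the way down, producing (after adjusting $\delta$ and $\beta$) a point of $\Sigma_\beta E$, to which $B_k$ is assigned at the appropriate sub-scale; or at some scale the density ceases to be $\beta$-intermediate --- i.e.\ $E$ or its complement becomes so sparse in the relevant subball, or $E$ splits there into well-separated pieces, that replacing $E$ in that subball by a simpler modification costs perimeter $\le C\mu(\text{subball})/(\text{radius})$ for a.e.\ radius (by the coarea formula applied to the distance function) --- after which one recurses on the pieces. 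A Besicovitch/Vitali selection keeps overlaps bounded, and the relative isoperimetric inequality \eqref{eq:relative isoperimetric inequality}, used on subballs where perimeter is already known to be finite, together with the density machinery \eqref{eq:measure theoretic and strong boundary}--\eqref{eq:def of theta}, is used to compare the measures at successive stages and to match the scale of the convolution with the scales occurring in the cover, so that the surgery costs telescope. The outcome is $t^{-1}\sum_{k\in\mathcal B_t}\mu(B_k)\le C\,\cH(\Sigma_\beta E\cap\Om)$ uniformly in $t$, hence $P(E,\Om)\le C\,\cH(\Sigma_\beta E\cap\Om)<\infty$; one may also invoke the earlier Federer-type characterization \cite[Theorem 1.1]{L-Fedchar} to obtain finiteness of perimeter on the resulting ``simple'' regions before assembling the global estimate.

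I expect the charging step to be the main obstacle. The set $\partial^*E\setminus\Sigma_\beta E$ --- points where the density of $E$ flickers between intermediate and extreme values --- can be nonempty, indeed of infinite $\cH$-measure, without contributing any mass to $\Sigma_\beta E$, yet the associated ``flickering'' balls do contribute to $\int_\Om g_{u_t}\,d\mu$ at each scale $t$; the descent and surgeries must be organized so that the total cost of resolving this flickering part is nonetheless bounded by $\cH(\Sigma_\beta E\cap\Om)$ and does not blow up as $t\to0$. Equally delicate is reconciling a fixed convolution scale with an efficient cover of $\Sigma_\beta E$ that may use balls of much smaller radius, which forces a genuinely multi-scale argument. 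Both difficulties are where the precise small value of $\beta$ (chosen so that whenever the density fails to be $\beta$-intermediate the corresponding surgery is geometrically cheap) and the quantitative doubling and Poincar\'e geometry of $X$ become essential.
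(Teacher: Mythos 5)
You should first be aware that the paper you are working from contains no proof of this statement at all: it is quoted verbatim from \cite[Theorem 1.1]{L-newFed}, a separate and long technical paper, and is used here as a black box (its only role in the present paper is to yield \eqref{eq:perimeter and strong boundary} when combined with \eqref{eq:def of theta}). So there is no internal argument to compare with, and your proposal has to be judged as an attempted reproof of that external result.

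As such, it has a genuine gap. The reductions in your first two paragraphs (exhaustion of $\Om$, discrete convolutions $u_t$, lower semicontinuity of the total variation, and the localization of $\int_\Om g_{u_t}\,d\mu$ to the family $\mathcal B_t$ of balls of $\delta$-intermediate density) are standard and fine, but they only restate the problem: everything rests on the uniform bound $t^{-1}\sum_{k\in\mathcal B_t}\mu(B_k)\le C\,\cH(\Sigma_\beta E\cap\Om)$, and the ``charging'' argument you offer for it is a plan, not a proof. The difficulty you yourself name in the last paragraph is precisely the fatal one: a ball of intermediate density at scale $t$ need not contain, at that or any comparable scale, a point of $\Sigma_\beta E$, since the liminf densities defining $E_\beta$ and $(X\setminus E)_\beta$ can drop below $\beta$ along sparse sequences of scales while the set keeps flickering; for such balls your descent never terminates at a chargeable point, and the surgery costs of order $\mu(\text{subball})/\text{radius}$, incurred at infinitely many scales and summed over bounded-overlap families at each scale, are a priori only controlled by $\cH(\partial^*E\cap\Om)$ --- which may be infinite and is exactly the quantity the new characterization is designed to avoid. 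Nothing in the sketch explains why these costs telescope, how the fixed convolution scale is matched to a cover of $\Sigma_\beta E$ by much smaller balls, or how the specific value of $\beta$ enters beyond the phrase ``adjusting $\delta$ and $\beta$''; in addition, invoking the relative isoperimetric inequality ``on subballs where perimeter is already known to be finite'' is circular unless that finiteness is established independently. Overcoming exactly these points is the substance of \cite{L-newFed}, so the proposal cannot be counted as a proof of the theorem; if your goal is the present paper's results, the correct move is simply to cite \cite{L-newFed}, as the author does.
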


Throughout this paper, we will use $\beta$ to denote the constant from this theorem;
we can assume that $\beta\le \gamma$.
Combining Theorem \ref{thm:new Federer} and \eqref{eq:def of theta}, we obtain that for every
open $\Om\subset X$ and $\mu$-measurable $E\subset X$, we have
\begin{equation}\label{eq:perimeter and strong boundary}
P(E,\Om)\le C_d\mathcal H(\Sigma_{\beta}E\cap\Om).
\end{equation}

For a function $u$ defined on an open set $\Om\subset X$,
we abbreviate super-level sets by
\[
\{u>t\}:=\{x\in \Om\colon u(x)>t\},\quad t\in\R.
\]
The following coarea formula is given in \cite[Proposition 4.2]{M}:
if $\Omega\subset X$ is open and $u\in L^1_{\loc}(\Omega)$, then
\begin{equation}\label{eq:coarea}
	\|Du\|(\Omega)=\int_{-\infty}^{\infty}P(\{u>t\},\Omega)\,dt.
\end{equation}
The integral should be understood as an upper integral; however if either side is finite, then
both sides are finite and the integrand is measurable.
In this case, \eqref{eq:coarea} moreover holds with $\Om$ replaced by any Borel set $A\subset \Om$.

\subsection{Capacities}
Recall that we always consider $1\le p<\infty$.
The $p$-capacity of a set $A\subset X$ is defined by
\[
\capa_p(A):=\inf \Vert u\Vert_{N^{1,p}(X)}^p,
\]
where the infimum is taken over all $u\in N^{1,p}(X)$ satisfying
$u\ge 1$ in $A$.
If a property holds outside a set of $p$-capacity zero, we say that it holds
$p$-quasieverywhere, abbreviated $p$-q.e.
	We say that a set $U\subset X$ is $p$-quasiopen
	if for every $\eps>0$ there exists an
	open set $G\subset X$ such that $\capa_p(G)<\eps$ and $U\cup G$ is open.
	By \cite[Remark 3.5]{ShanH}, if $U\subset X$ is $p$-quasiopen, then
	\begin{equation}\label{eq:path open}
	\textrm{for }p\textrm{-a.e. curve }\gamma\colon [0,\ell_{\gamma}]\to X
	\textrm{ we have that }\gamma^{-1}(U)\textrm{ is relatively open.}
	\end{equation}
Denoting the family of curves $\gamma\colon [0,\ell_{\gamma}]\to X$
intersecting a set $A\subset X$ by $\Gamma_A$,
by \cite[Proposition 1.48]{BB} we know that
\begin{equation}\label{eq:cap and mod}
	\textrm{if }\capa_p(A)=0,
	\quad\textrm{then}\quad
	\Mod_p(\Gamma_A)=0.
\end{equation}

The variational $p$-capacity of a set $A\subset W$
with respect to a bounded open set $W\subset X$ is defined by
\[
\rcapa_p(A,W):=\inf \int_X g_u^p \,d\mu,
\]
where the infimum is taken over all $u\in N_0^{1,p}(W)$ satisfying
$u\ge 1$ in $A$, and $g_u$ is the minimal $p$-weak upper gradient of $u$ (in $X$).
This is an outer capacity, meaning that whenever $A\subset \overline{A}\subset W$, we have
\[
\rcapa_p(A,W)=\inf_{\substack{V\textrm{ open}\\ A\subset V\subset W}}\rcapa_p(V,W),
\]
see \cite[Theorem 6.19(vii)]{BB}.
For basic properties satisfied by capacities, such as monotonicity and countable subadditivity,
see \cite{BB}.

If $H\subset X$ is measurable, then
\begin{equation}\label{eq:quasieverywhere equivalence class}
	\textrm{if }u\in N^{1,p}(H)\textrm{ and }v=u\textrm{ p-q.e. in }H,\textrm{ then }\ \Vert v-u\Vert_{N^{1,p}(H)}=0,
\end{equation}
see \cite[Proposition 1.61]{BB}.
We also know that
\begin{equation}\label{eq:ae to qe}
	\textrm{if }u,v\in N^{1,p}(H)\textrm{ and }v=u\textrm{ a.e. in }H,\textrm{ then } 
	g_v=g_u\textrm{ a.e. in }H,
\end{equation}
by \cite[Corollary 1.49, Proposition 1.59]{BB}.

For an open set $\Om\subset X$ and
$u\in N^{1,1}(\Om)$, we know that $1$-q.e. point is a Lebesgue point, that is,
\begin{equation}\label{eq:Lebesgue point result}
\lim_{r\to 0}\,\vint{B(x,r)}|u-u(x)|\,d\mu=0
\quad\textrm{for }1\textrm{-q.e. }x\in\Om;
\end{equation}
see \cite[Theorem 4.1]{KKST}
(in this paper $\mu(X)=\infty$ is assumed,
but this can be circumvented by using \cite[Lemma~3.1]{Maka}).

We will often consider the so-called precise representative
\begin{equation}\label{eq:precise representative}
	u^*(x):=\limsup_{r\to 0}\vint{B(x,r)}u\,d\mu.
\end{equation}
If $u\in\BV(\Om)$ and $\Vert Du\Vert$ is absolutely continuous with respect to $\mu$, then
\begin{equation}\label{eq:from BV to Sobolev}
	u^*\in N^{1,1}(\Om)\quad\textrm{with}\quad \int_{\Om}g_{u^*}\,d\mu \le C\Vert Du\Vert(\Om)
\end{equation}
for a constant $C\ge 1$ depending only on $C_d,C_P,\lambda$; 
by \cite[Remark 4.7]{HKLL} we know that this is true for \emph{some} representative of $u$, and then by
\eqref{eq:quasieverywhere equivalence class} and \eqref{eq:Lebesgue point result}
we know that the representative can be taken to be $u^*$.

For every $x\in X$ and $0<r<\tfrac 18 \diam X$, by \cite[Proposition 6.16]{BB} we have
\begin{equation}\label{eq:capa basic estimate}
C_{\mathrm{sob}}^{-1} \frac{\mu(B(x,r))}{r^p}\le \rcapa_p(B(x,r),B(x,2r))
\le C_d \frac{\mu(B(x,r))}{r^p},
\end{equation}
where $C_{\mathrm{sob}}\ge 1$ is a constant from a Sobolev inequality,
only depending on $C_d,C_P,\lambda$.
By \cite[Proposition 6.16]{BB}, for every $A\subset B(x,r)$ we then also have
\begin{equation}\label{eq:measure capa comparison}
	\begin{split}
\frac{\mu(A)}{\mu(B(x,r))}
&\le C_{\mathrm{sob}}r^p\frac{\rcapa_p(A,B(x,2r))}{\mu(B(x,r))}\\
&\le C_{\mathrm{sob}}C_d\frac{\rcapa_p(A,B(x,2r))}{\rcapa_p(B(x,r),B(x,2r))}
\quad\textrm{by }\eqref{eq:capa basic estimate}.
	\end{split}
\end{equation}

Next we define the fine topology in the case $p=1$.
For the analogous definition and theory in the case $1<p<\infty$, see
e.g. the monographs \cite{BB,MZ}.

\begin{definition}\label{def:1 fine topology}
	We say that $A\subset X$ is $1$-thin at the point $x\in X$ if
	\[
	\lim_{r\to 0}\frac{\rcapa_1(A\cap B(x,r),B(x,2r))}{\rcapa_1(B(x,r),B(x,2r))}=0.
	\]
	We say that a set $U\subset X$ is $1$-finely open if $X\setminus U$ is $1$-thin at every $x\in U$.
	Then we define the $1$-fine topology as the collection of $1$-finely open sets on $X$.
	
	We denote the $1$-fine closure of a set $H\subset X$,
	i.e. the smallest $1$-finely closed set containing $H$, by $\overline{H}^1$.
	The $1$-base $b_1 H$ is defined as the set of points
	where $H$ is \emph{not} $1$-thin.
\end{definition}

See \cite[Section 4]{L-FC} for discussion on this definition, and for a proof of the fact that the
$1$-fine topology is indeed a topology.
Using \eqref{eq:capa basic estimate}, we see that a set $A\subset X$ is $1$-thin at $x\in X$
if and only if
\[
\lim_{r\to 0}r\frac{\rcapa_1(A\cap B(x,r),B(x,2r))}
{\mu(B(x,r))}=0;
\]
in fact this was the formulation used in \cite{L-FC}.
From \eqref{eq:measure capa comparison} we get that if $U\subset X$ is a $1$-finely open set, then
\begin{equation}\label{eq:fine and measure topology}
	\textrm{for every }x\in U\ \textrm{ we have }\ \theta^*(X\setminus U,x)=0.
\end{equation}
By \cite[Theorem 4.6]{L-CK}, we know that
\begin{equation}\label{eq:A minus b 1 A}
	\capa_1(A\setminus b_1 A)=0.
\end{equation}
By \cite[Theorems 4.3 and 5.1]{HaKi}, for an arbitrary set $A\subset X$ we have
\begin{equation}\label{eq:capa and H measure}
	\capa_1(A)=0\quad\textrm{if and only if}\quad\mathcal H(A)=0.
\end{equation}
By \cite[Corollary 6.12]{L-CK} combined with \eqref{eq:capa and H measure}
we know that for an arbitrary set $U\subset X$,
\begin{equation}\label{eq:finely and quasiopen}
U\textrm{ is }1\textrm{-quasiopen }\ \Longleftrightarrow\ \ U=V\cup N\
\textrm{ where }V\textrm{ is 1-finely open and }\capa_1(N)=0.
\end{equation}
By \cite[Lemma 9.3]{BB15} we then know that every $1$-quasiopen and every $1$-finely open set
is measurable.
Moreover, by combining \eqref{eq:finely and quasiopen}
with \eqref{eq:path open} and \cite[Proposition 3.5]{BB15},
we have that if $H\subset X$ is $\mu$-measurable and
$U\subset X$ is $1$-finely open, and $u\in N^{1,1}_{\loc}(H)$,
then for the minimal $1$-weak upper gradients we have
\begin{equation}\label{eq:ug in finely open}
g_{u,H}=g_{u,H\cap U}\quad \textrm{a.e. in }H\cap U.
\end{equation}
By \cite[Proposition 3.3]{L-Fed}, we know that
if $A\subset W$ for an open set $W\subset X$, then
\begin{equation}\label{eq:capa of fine closure}
\capa_1(A)=\capa_1(\overline{A}^1)
\quad\textrm{and}\quad \rcapa_1(A,W)=\rcapa_1(\overline{A}^1\cap W,W).
\end{equation}

Our standing assumptions throughout the paper will be the following.\\

\emph{Throughout this paper we assume that $(X,d,\mu)$ is a complete metric space
	that is equipped with the doubling measure $\mu$ and supports a
	$(1,1)$-Poincar\'e inequality.}

\section{Preliminary results}

In this section we prove and record some preliminary results.

The following lemma is fairly standard, though we do not know a specific reference.

\begin{lemma}\label{lem:capacities}
Let $v\in L^1(X)$ be a pointwise defined function and let
$D_j\subset X$, $j\in\N$,
be measurable sets such that $\capa_1(D_j)\to 0$, and suppose that $v\in N^{1,1}(X\setminus D_j)$
with
\begin{equation}\label{eq:g Dj assumption}
\int_{X\setminus D_j}g_{v,X\setminus D_j}\,d\mu\le M<\infty\quad\textrm{for all }j\in\N.
\end{equation}
Then $v\in N^{1,1}(X)$ with $\int_{X}g_{v,X}\,d\mu\le M$.
\end{lemma}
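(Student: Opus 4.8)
The plan is to build, out of the minimal upper gradients $g_j:=g_{v,X\setminus D_j}$, a single nonnegative Borel function $g$ on $X$ that is a $1$-weak upper gradient of $v$ in $X$ with $\int_X g\,d\mu\le M$; since the conclusion does not mention the $D_j$, and since once such a $g$ is produced the lemma follows immediately from $v\in L^1(X)$ and the minimality of $g_{v,X}$, we are free to modify the sequence $(D_j)$ along the way. The one real obstacle is that the $D_j$ need not be $\mu$-negligible, so $v$ cannot simply be ``glued across'' $D_j$. To get around this I will first shrink $X\setminus D_j$ to a $1$-finely open set; the payoff is that $\gamma^{-1}(D_j)$ is then relatively closed along $1$-a.e.\ curve, and a compactness argument on the parameter interval does the gluing for free.

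\textbf{Reduction.} Passing to a subsequence, assume $\capa_1(D_j)\le 2^{-j}$, and replace $D_j$ by $\overline{\bigcup_{k\ge j}D_k}^{\,1}$. These new sets are $1$-finely closed and decreasing, by \eqref{eq:capa of fine closure} and countable subadditivity their capacities are $\le 2^{1-j}\to 0$, and since their complements are contained in $X\setminus D_j$, the restriction $v|_{X\setminus\overline{\bigcup_{k\ge j}D_k}^{\,1}}$ again lies in $N^{1,1}$ with $g_{v,\,\cdot}\le g_{v,X\setminus D_j}$ a.e., hence with gradient integral $\le M$. So from now on the $D_j$ may be assumed $1$-finely closed (hence, by \eqref{eq:finely and quasiopen}, with $1$-quasiopen complements) and decreasing; setting $D_\infty:=\bigcap_j D_j$ we have $\capa_1(D_\infty)=0$, hence $\Mod_1(\Gamma_{D_\infty})=0$ by \eqref{eq:cap and mod}.

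\textbf{A monotone limit.} Extend each $g_j$ by $0$ on $D_j$. For $k\ge j$ the restriction of $g_k$ to $X\setminus D_j\subset X\setminus D_k$ is a $1$-weak upper gradient of $v$ there, so $g_j\le g_k$ a.e.\ on $X\setminus D_j$ by minimality, and $g_j=0\le g_k$ on $D_j$; thus $(g_j)$ is a.e.\ nondecreasing. Let $g:=\sup_j g_j$. Monotone convergence gives $\int_X g\,d\mu=\lim_j\int_{X\setminus D_j}g_j\,d\mu\le M$, so after passing to a Borel representative, $g$ is a nonnegative Borel function in $L^1(X)$.

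\textbf{$g$ is a $1$-weak upper gradient of $v$ in $X$.} Discard the following $1$-null curve families: $\Gamma_{D_\infty}$; for each $j$, the curves for which $\gamma^{-1}(X\setminus D_j)$ is not relatively open (null by \eqref{eq:path open}); for each $j$, the curves in $X\setminus D_j$ on which $(v,g_j)$ fails the upper gradient inequality; and, by \eqref{eq:mu null set and curves}, the curves spending positive length in $\bigcup_j\{g_j>g\}$. For any remaining curve $\gamma\colon[0,\ell_\gamma]\to X$, the sets $\gamma^{-1}(D_j)$ are closed in the compact interval $[0,\ell_\gamma]$, decreasing, with intersection $\gamma^{-1}(D_\infty)=\emptyset$; hence $\gamma^{-1}(D_{j_0})=\emptyset$ for some $j_0$, i.e.\ $\gamma$ runs in $X\setminus D_{j_0}$, and so for its endpoints $x,y$ we get $|v(x)-v(y)|\le\int_\gamma g_{j_0}\,ds\le\int_\gamma g\,ds$. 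This proves the claim; together with $v\in L^1(X)$ it gives $v\in N^{1,1}(X)$, and minimality of $g_{v,X}$ yields $\int_X g_{v,X}\,d\mu\le\int_X g\,d\mu\le M$. The only place the smallness (rather than nullity) of $\capa_1(D_j)$ is genuinely needed is the reduction step: without passing to $1$-finely closed hulls one could not deduce, for a curve avoiding $D_\infty$, that it avoids some single $D_{j_0}$.
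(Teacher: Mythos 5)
Your proof is correct, and its skeleton is the same as the paper's: pass to a subsequence with $\capa_1(D_j)\le 2^{-j}$, make the sets decreasing, take the a.e.\ monotone limit $g$ of the minimal gradients (so $\int_X g\,d\mu\le M$ by monotone convergence), discard the negligible curve families coming from \eqref{eq:cap and mod}, \eqref{eq:mu null set and curves} and the weak upper gradient property, and conclude because every surviving curve lies in a single $X\setminus D_{j_0}$. The genuine difference is your extra reduction to the $1$-fine closures $\overline{\bigcup_{k\ge j}D_k}^{\,1}$: the paper simply replaces $D_j$ by $\bigcup_{k\ge j}D_k$ and then asserts that a curve avoiding $\bigcap_j D_j$ is contained in $X\setminus D_j$ for some $j$, which for merely measurable sets does not follow from decreasingness alone. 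Your fine-closure step (harmless by \eqref{eq:capa of fine closure} and countable subadditivity of $\capa_1$) makes the complements $1$-finely open, hence $1$-quasiopen by \eqref{eq:finely and quasiopen}, so that by \eqref{eq:path open} the preimages $\gamma^{-1}(D_j)$ are compact and decreasing along $1$-a.e.\ curve, and the finite intersection property produces the required $j_0$. Thus your route buys a rigorous justification of the one step the paper leaves implicit, at the modest cost of invoking the fine topology machinery; apart from that, the two arguments coincide.
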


Recall that $g_{v,X\setminus D_j}$ denotes the minimal $1$-weak upper gradient of
$v$ in $X\setminus D_j$.

\begin{proof}
By passing to a subsequence (not relabelled), we can assume that $\capa_1(D_j)\le 2^{-j}$.
Then by replacing the sets $D_j$ with $\bigcup_{k=j}^{\infty}D_k$, we can assume that
$D_{j+1}\subset D_j$.
From now on, we understand each $g_{v,X\setminus D_j}$ to be pointwise defined.
From the definition of minimal $1$-weak upper gradients, we get
$g_{v,X\setminus D_j}\le g_{v,X\setminus D_{j+1}}$ a.e.
in $X\setminus D_j$.
We  define pointwise
$g:=\limsup_{j\to \infty}g_{v,X\setminus D_j}$
in $X\setminus \bigcap_{j=1}^{\infty}D_j$, and then
$\int_{X}g\,d\mu\le M$ by \eqref{eq:g Dj assumption};
note that $\capa_1\left(\bigcap_{j=1}^{\infty}D_j\right)=0$ and thus
also $\mu\left(\bigcap_{j=1}^{\infty}D_j\right)=0$. Moreover, if we denote by
$\Gamma$ the family of curves intersecting $\bigcap_{j=1}^{\infty}D_j$, we have
$\Mod_1(\Gamma)=0$ by \eqref{eq:cap and mod}.
Note also that $g\ge g_{v,X\setminus D_{j}}$ a.e. in $X\setminus D_{j}$, for all $j\in\N$.
The family of curves $\gamma\colon [0,\ell_{\gamma}] \to X\setminus D_j$
for which the pair $(v,g_{v,X\setminus D_j})$
does not satisfy the upper gradient inequality on $\gamma$, and the family of curves
$\gamma\colon [0,\ell_{\gamma}] \to X\setminus D_j$ for which
\[
\mathcal L^1(\gamma^{-1}(\{g< g_{v,X\setminus D_{j}}\}))>0,
\]
are $\Mod_1$-negligible for all $j\in\N$, by \eqref{eq:mu null set and curves}.
Consider a curve $\gamma\colon [0,\ell_{\gamma}]\to X\setminus \bigcap_{j=1}^{\infty}D_j$
outside these exceptional families.
Since $D_j$ is a decreasing sequence,
$\gamma$ is in $X\setminus D_j$ for some $j\in\N$.
Hence we get
\[
|v(\gamma(0))-v(\gamma(\ell_{\gamma}))|
\le \int_{\gamma}g_{v,X\setminus D_j}\,ds
\le  \int_{\gamma}g\,ds.
\]
Thus $g$ is a $1$-weak upper gradient of $v$ in $X$, and we have the result.
\end{proof}

The following lemma is given in \cite[Lemma 3.3]{LahQBV}.
\begin{lemma}\label{lem:capacity and Newtonian function}
	Let $G\subset X$ with $\capa_1(G)<\infty$ and let $\eps>0$.
	Then there exists an open set $W\supset G$ with $\capa_1(W)\le C(\capa_1(G)+\eps)$ and a
	function $\eta\in N^{1,1}_0(W)$ with $0\le\eta\le 1$ on $X$, $\eta=1$ in $G$, and
	$\Vert \eta\Vert_{N^{1,1}(X)}\le C(\capa_1(G)+\eps)$,
	for a constant $C$ depending only on $C_d,C_P,\lambda$.
\end{lemma}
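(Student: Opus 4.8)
The plan is to take a near-optimal competitor $u\in N^{1,1}(X)$ for $\capa_1(G)$, let $W$ be an open set slightly larger than the superlevel set $\{u>\tfrac12\}$, and take $\eta$ to be a truncated and rescaled copy of $u$. First I would use the definition of the $1$-capacity to pick $u\in N^{1,1}(X)$ with $u\ge 1$ on $G$ and $\Vert u\Vert_{N^{1,1}(X)}\le \capa_1(G)+\eps$; replacing $u$ with $\max\{0,u\}$, which increases neither term of $\Vert u\Vert_{N^{1,1}(X)}$, I may assume $u\ge 0$, so that still $u\ge 1$ on $G$. Put $A:=\{x\in X\colon u(x)>\tfrac12\}$. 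Since $2u\in N^{1,1}(X)$ with $2u\ge 1$ on $A$ and $\Vert 2u\Vert_{N^{1,1}(X)}=2\Vert u\Vert_{N^{1,1}(X)}$, the definition of capacity gives $\capa_1(A)\le 2(\capa_1(G)+\eps)$, while $G\subset\{u\ge 1\}\subset A$.

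Next, using the known fact that $\capa_1$ is an outer capacity on our space $X$ (see \cite{BB}), I would choose an open set $W$ with $A\subset W$ and $\capa_1(W)\le\capa_1(A)+\eps$; then $G\subset A\subset W$ and $\capa_1(W)\le 2\capa_1(G)+3\eps\le C(\capa_1(G)+\eps)$. If one prefers not to invoke outer regularity directly, the same $W$ can be produced from the $1$-quasicontinuity of $u$: after removing an open set of $1$-capacity less than $\eps$, the function $u$ becomes continuous, so $\{u>\tfrac12\}$ agrees outside that set with an open subset of $X$, and $W$ is the union of the two pieces.

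Finally, let $\varphi(t):=\max\{0,\min\{1,2t-1\}\}$, which is $2$-Lipschitz, vanishes on $(-\infty,\tfrac12]$, equals $1$ on $[1,\infty)$, and satisfies $0\le\varphi(t)\le 2t$ for all $t\ge 0$; set $\eta:=\varphi\circ u$. Directly from the upper gradient inequality, $2g_u$ is a $1$-weak upper gradient of $\eta$, so $\eta\in N^{1,1}(X)$ with $g_\eta\le 2g_u$ a.e., and together with $0\le\eta\le 2u$ this yields $\Vert\eta\Vert_{N^{1,1}(X)}\le 2\Vert u\Vert_{N^{1,1}(X)}\le C(\capa_1(G)+\eps)$. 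By construction $0\le\eta\le 1$ and $\eta=1$ on $\{u\ge 1\}\supset G$, while $\{\eta>0\}=\{u>\tfrac12\}=A\subset W$, so $\eta=0$ on $X\setminus W$ and hence $\eta\in N^{1,1}_0(W)$. Since the constants produced are absolute, the stated dependence of $C$ only on $C_d,C_P,\lambda$ holds trivially.

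The single step carrying genuine content is the passage from the superlevel set $A$ to a comparable open set $W$; this is where completeness, the doubling property, and the Poincar\'e inequality enter, through outer regularity of $\capa_1$ (equivalently, quasicontinuity of Newton--Sobolev functions on $X$). The rest is routine truncate-and-rescale bookkeeping together with the elementary behaviour of weak upper gradients under $1$-Lipschitz compositions.
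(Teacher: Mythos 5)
Your proof is correct, and it takes a different route from the one the paper relies on. The paper does not prove the lemma itself but cites \cite[Lemma 3.3]{LahQBV}, where the argument is a covering/boxing-inequality construction: $G$ is covered by balls $B_j=B(x_j,r_j)$ with $\sum_j \mu(B_j)/r_j\le C(\capa_1(G)+\eps)$, one takes $W$ to be the union of the dilated balls and $\eta$ a supremum of Lipschitz cutoffs adapted to the $B_j$; this is why the constant there depends on $C_d,C_P,\lambda$. You instead get everything from a near-optimal competitor $u$ for $\capa_1(G)$, the level set $A=\{u>\tfrac12\}$, the outer-capacity property of $\capa_1$ (equivalently, quasicontinuity of $N^{1,1}$-functions), and the $2$-Lipschitz truncation $\eta=\varphi\circ u$; the bookkeeping $\capa_1(A)\le 2(\capa_1(G)+\eps)$, $0\le\eta\le 2u$, $g_\eta\le 2g_u$, and $\{\eta>0\}\subset A\subset W$ is all in order, and it even yields an absolute constant. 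The one load-bearing ingredient is the outer regularity of $\capa_1$ for $p=1$, which does hold under the standing assumptions (complete, doubling, $(1,1)$-Poincar\'e) by the quasicontinuity and density-of-Lipschitz-functions results in \cite{BB}, valid for all $1\le p<\infty$; you correctly flag that this is where the structural assumptions enter. In short: your argument is shorter but leans on the (nontrivial) quasicontinuity theorem, whereas the cited proof is more self-contained within the capacity--Hausdorff-content circle of ideas used elsewhere in this paper; either one establishes the lemma.
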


Next we record an isoperimetric inequality.
By \cite[Corollary 3.8]{BB}, there exist constants $C>0$ and $\sigma>0$ depending
only on $C_d$ such that for all $x\in X$ and $0<r< R< \diam X/2$, we have
\begin{equation}\label{eq:upper bound with sigma}
\frac{\mu(B(x,r))}{\mu(B(x,R))} \le C\left(\frac{r}{R}\right)^{\sigma}.
\end{equation}
Thus there exists a constant $L> 2$ depending only on $C_d$ such that
for every $x\in X$ and every $0<r< \diam X/(2L)$, we have
\[
\mu(B(x,2r))\le \frac{1}{2}\mu(B(x,Lr)).
\]
Then for any measurable set $H\subset B(x,2r)$,
by the relative isoperimetric inequality \eqref{eq:relative isoperimetric inequality} we have
\begin{equation}\label{eq:rel isop ineq application}
	\begin{split}
		\mu(H)
		&=\mu(H\cap B(x,Lr))\\
		&\le 2C_PL r P(H,B(x,L\lambda r))\\
		&\le 2C_PL r P(H,X).
	\end{split}
\end{equation}

Next we note that the following consequence of the Borel regularity of $\mu$ holds.
This can be proved similarly as in e.g. \cite[Lemma 4.3]{L-CK}.

\begin{lemma}\label{lem:weak Borel regularity}
	Let $A\subset X$. Then there exists a Borel set $A^*\supset A$ such that
	\[
	\mu(A\cap B(x,r))= \mu(A^*\cap B(x,r))
	\]
	for every ball $B(x,r)\subset X$.
\end{lemma}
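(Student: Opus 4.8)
The plan is to build $A^*$ as a countable intersection of open sets, exploiting that $\mu$ is Borel regular and outer, together with the separability and the doubling property of $X$ (which makes $X$ Lindelöf, so that only countably many balls are needed). First I would cover $X$ by a countable family of balls: since $X$ is separable, fix a countable dense set and let $\{B_i\}_{i\in\N}$ be the collection of all balls centered at those points with rational radii. For each fixed $i$, I want to find a Borel (in fact $G_\delta$) set $A_i^*\supset A$ with $\mu(A_i^*\cap B_i)=\mu(A\cap B_i)$; the natural choice uses Borel regularity of $\mu$ applied to the set $A\cap B_i$ (or rather to $A$ tested against $B_i$), producing for each $n$ an open set $U_{i,n}\supset A$ with $\mu(U_{i,n}\cap B_i)\le \mu(A\cap B_i)+1/n$, and then setting $A_i^*:=\bigcap_n U_{i,n}$, which is Borel, contains $A$, and satisfies $\mu(A_i^*\cap B_i)=\mu(A\cap B_i)$ by continuity of $\mu$ from above on the ball $B_i$ (here finiteness $\mu(B_i)<\infty$ from the doubling assumption is what makes continuity from above legitimate). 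The one technical point to get right is that Borel regularity gives, for the possibly non-measurable set $A\cap B_i$, a Borel superset of equal measure; intersecting that Borel superset with a slightly larger open ball and then with open neighborhoods of $A$ yields the $U_{i,n}$ one wants — I would just invoke Borel regularity directly for the set $A$ and multiply by $\ch_{B_i}$ in the integral, which is cleaner.

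Next I would set $A^*:=\bigcap_{i\in\N} A_i^*$. This is a Borel set containing $A$. It remains to verify $\mu(A^*\cap B(x,r))=\mu(A\cap B(x,r))$ for \emph{every} ball, not just the ones in the countable family. The inequality $\mu(A^*\cap B(x,r))\ge \mu(A\cap B(x,r))$ is immediate from $A\subset A^*$ and monotonicity. For the reverse, fix an arbitrary ball $B(x,r)$ and approximate it from inside by balls from the countable family: since $\{B_i\}$ is generated by a dense set with rational radii, I can choose an increasing sequence $B_{i_k}\subset B(x,r)$ with $\bigcup_k B_{i_k}=B(x,r)$ (take centers converging to $x$ and radii increasing to $r$ appropriately, staying inside $B(x,r)$). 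Then
\[
\mu(A^*\cap B(x,r))=\lim_{k\to\infty}\mu(A^*\cap B_{i_k})\le \lim_{k\to\infty}\mu(A^*_{i_k}\cap B_{i_k})=\lim_{k\to\infty}\mu(A\cap B_{i_k})\le \mu(A\cap B(x,r)),
\]
using continuity of $\mu$ from below (valid for the monotone union, with no finiteness needed) in the first step, $A^*\subset A^*_{i_k}$ in the second, the defining property of $A^*_{i_k}$ in the third, and monotonicity in the last. This closes the argument.

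The step I expect to be mildly delicate is the inside-approximation of an arbitrary ball $B(x,r)$ by members of the fixed countable family $\{B_i\}$: one must be slightly careful that $B(x,r)=\{y:d(y,x)<r\}$ is open, so for any $y\in B(x,r)$ there is a rational-radius ball around a dense point containing $y$ and contained in $B(x,r)$; taking a countable subfamily of these and then their increasing unions (or just the union of all of them, which is $B(x,r)$, reorganized as an increasing sequence of finite unions) does the job. Everything else is a routine application of Borel regularity of $\mu$ and the monotone convergence properties of measures, exactly as in the cited argument \cite[Lemma 4.3]{L-CK}.
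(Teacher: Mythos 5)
Your construction---a countable family of balls $\{B_i\}$ with centers in a countable dense set and rational radii, a Borel hull of $A$ adapted to each $B_i$, the countable intersection $A^*:=\bigcap_i A_i^*$, and then passing from the $B_i$ to arbitrary balls---is exactly the standard argument the paper points to via \cite[Lemma 4.3]{L-CK}, and it does prove the lemma. One step is misstated, though: Borel regularity of $\mu$ does not by itself produce \emph{open} sets $U_{i,n}\supset A$ with $\mu(U_{i,n}\cap B_i)\le\mu(A\cap B_i)+1/n$; it only gives Borel supersets of equal outer measure, and upgrading these to open sets would require, in addition, outer regularity of the finite measure obtained by restricting $\mu$ to $B_i$ (both to approximate the hull of $A\cap B_i$ and to find an open neighbourhood of the closed set $X\setminus B_i$ meeting $B_i$ in small measure). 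Since openness is never actually used, the clean repair is to drop it: choose a Borel set $D_i\supset A\cap B_i$ with $\mu(D_i)=\mu(A\cap B_i)\le\mu(B_i)<\infty$ and set $A_i^*:=D_i\cup(X\setminus B_i)$. This is Borel, contains $A$, and satisfies $\mu(A_i^*\cap B_i)\le\mu(D_i)=\mu(A\cap B_i)$, with the converse inequality by monotonicity; no continuity from above is needed. With this change, $A^*$ has the required property on every $B_i$ exactly as you argue.

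For the passage to an arbitrary ball $B(x,r)$, your primary suggestion (an increasing sequence of \emph{single} family balls exhausting $B(x,r)$) can indeed be arranged---choose centers $x_k$ in the dense set with $d(x_k,x)\to 0$ fast and rational radii $r_k\nearrow r$ satisfying $d(x_k,x_{k+1})+r_k\le r_{k+1}\le r-d(x_{k+1},x)$---and then your chain of inequalities is correct. Your parenthetical fallback via increasing finite unions of family balls does not work verbatim, since the identity $\mu(A^*\cap B_i)=\mu(A\cap B_i)$ is only known for single balls of the family and subadditivity goes the wrong way. A shorter route avoids the exhaustion altogether: given any ball $B$, pick a family ball $B_i\supset B$; since $A^*\cap B_i$ is a $\mu$-measurable set containing $A\cap B_i$ with the same finite measure, Carath\'eodory measurability of $B$ splits $\mu(A\cap B_i)$ and $\mu(A^*\cap B_i)$ into the parts inside and outside $B$, and comparing the two splittings term by term gives $\mu(A^*\cap B)=\mu(A\cap B)$ directly.
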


Denote by $\lceil b\rceil$ the smallest integer at least $b\in\R$.
Let
\begin{equation}\label{eq:c star definition}
	c_*:=\frac{\alpha \beta}{8C_d^{2+\lceil \log_2 \lambda \rceil} C_P C_{\textrm{sob}} L},
\end{equation}
where $\alpha$ is defined after \eqref{eq:def of theta}
and can be assumed to take values in $(0,1)$, $0<\beta\le 1/2$ is from
Theorem \ref{thm:new Federer}, and $L\ge 2$ is from before \eqref{eq:rel isop ineq application}.
Recall that we also assume $C_P,C_{\textrm{sob}}\ge 1$ (for convenience).
The number $0<c_*<1$ is the constant used in our main Theorem \ref{thm:removability theorem}.

The removable sets that we consider are expected to have zero $\mu$-measure.
For this reason, the following basic fact is useful.
Recall that if a property holds outside a set of $1$-capacity zero, we say that it holds at
$1$-quasi every point, abbreviated ``$1$-q.e.''

\begin{lemma}\label{lem:zero measure}
Suppose $A\subset X$ is such that
\[
\liminf_{r\to 0}\frac{\rcapa_1(A\cap B(x,r),B(x,2r))}{\rcapa_1(B(x,r),B(x,2r))}<c_*
\quad\textrm{for 1-q.e. }x\in X.
\]
Then $\mu(A)=0$.
\end{lemma}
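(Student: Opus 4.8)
The plan is to argue by contradiction: suppose $\mu(A)>0$. Using Lemma \ref{lem:weak Borel regularity} I would first replace $A$ by a Borel set $A^*\supset A$ with the same measure on every ball, so that density considerations make sense; note that the capacitary density hypothesis for $A$ passes to $A^*$ by monotonicity of $\rcapa_1$ only in one direction, so I would instead just work with $A$ directly for the capacity estimates and with $A^*$ for the measure-density statements, keeping in mind $\mu(A\cap B)=\mu(A^*\cap B)$. Since $\mu(A)=\mu(A^*)>0$, by the Lebesgue density theorem (a standard consequence of the doubling property) $\mu$-a.e.\ point of $A^*$ is a point of density $1$, i.e.\ $\theta^*(A^*,x)=1$ (indeed $\theta_*(A^*,x)=1$) for a.e.\ $x\in A^*$. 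In particular there is a set of positive measure, hence of positive $1$-capacity by \eqref{eq:capa and H measure} (positive measure forces positive $\mathcal H$, hence positive $\capa_1$), consisting of such density points.

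The heart of the argument is then the comparison \eqref{eq:measure capa comparison} with $p=1$: for $x$ a density point of $A^*$ and every small $r$,
\[
1-o(1)=\frac{\mu(A^*\cap B(x,r))}{\mu(B(x,r))}=\frac{\mu(A\cap B(x,r))}{\mu(B(x,r))}
\le C_{\mathrm{sob}}C_d\,\frac{\rcapa_1(A\cap B(x,r),B(x,2r))}{\rcapa_1(B(x,r),B(x,2r))}.
\]
Hence
\[
\liminf_{r\to 0}\frac{\rcapa_1(A\cap B(x,r),B(x,2r))}{\rcapa_1(B(x,r),B(x,2r))}\ge \frac{1}{C_{\mathrm{sob}}C_d}.
\]
Since $c_*$ as defined in \eqref{eq:c star definition} satisfies $c_*\le \tfrac{1}{C_{\mathrm{sob}}C_d}$ (this follows from $\alpha,\beta\le 1$, $C_P,C_d,L\ge 2$ and the denominator containing the extra factors $8 C_d^{1+\lceil\log_2\lambda\rceil}C_P L$, so in particular $8 C_d C_P L\ge 1$ gives the needed inequality), the liminf at $x$ is $\ge c_*$, contradicting the hypothesis at $x$. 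Thus the hypothesis fails on a set of positive $\mu$-measure, hence (again by \eqref{eq:capa and H measure}) on a set of positive $1$-capacity, contradicting that it holds $1$-q.e. Therefore $\mu(A)=0$.

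The only mildly delicate point — which I do not expect to be a real obstacle — is the bookkeeping that $c_*\le (C_{\mathrm{sob}}C_d)^{-1}$; this is immediate from the explicit formula \eqref{eq:c star definition} once one observes every factor omitted from the comparison constant is $\ge 1$. A second minor point is measurability: $A$ need not be measurable, but the capacity $\rcapa_1(A\cap B(x,r),B(x,2r))$ and the outer measure $\mu$ are defined for arbitrary sets, and \eqref{eq:measure capa comparison} as recorded in the excerpt holds for every $A\subset B(x,r)$, so the chain of inequalities above is valid verbatim; the density theorem is applied to the Borel set $A^*$, whose measure on balls agrees with that of $A$ by Lemma \ref{lem:weak Borel regularity}.
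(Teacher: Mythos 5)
Your proof is correct and is essentially the paper's own argument run in the contrapositive: both rest on Lemma \ref{lem:weak Borel regularity}, the comparison \eqref{eq:measure capa comparison} (with the observation that $C_{\mathrm{sob}}C_d c_*<1$ by the explicit formula \eqref{eq:c star definition}), the Lebesgue density/differentiation theorem for the Borel set $A^*$, and the fact that sets of zero $1$-capacity are $\mu$-negligible. The paper argues directly (the hypothesis plus \eqref{eq:measure capa comparison} forces $\liminf_{r\to 0}\mu(A^*\cap B(x,r))/\mu(B(x,r))<1$ for a.e.\ $x$, hence $\mu(A^*)=0$), which is the same mechanism as your contradiction at density points.
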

\begin{proof}
By \eqref{eq:measure capa comparison}, we have
\[
\liminf_{r\to 0}\frac{\mu(A\cap B(x,r))}{\mu(B(x,r))}
<C_{\textrm{sob}}C_dc_*
= \frac{\alpha \beta}{8C_d^{1+\lceil \log_2 \lambda \rceil} C_P L}
<1
\quad\textrm{for 1-q.e. }x\in X.
\]
By Lemma \ref{lem:weak Borel regularity}, we find a Borel set $A^*\supset A$ such that
\[
\mu(A\cap B(x,r))= \mu(A^*\cap B(x,r))
\]
for every ball $B(x,r)\subset X$.
Thus in fact
\[
\liminf_{r\to 0}\frac{\mu(A^*\cap B(x,r))}{\mu(B(x,r))}<1
\]
for $1$-q.e. $x\in X$, and then also for a.e. $x\in X$.
By the Lebesgue differentiation theorem, see e.g. \cite[p. 77]{HKSTbook}, it follows that necessarily
$\mu(A^*)=0$ and then also $\mu(A)=0$.
\end{proof}

The following standard fact can be proved as in e.g.
\cite[Lemma 2.6]{KKST-P}; note that when $D\subset X$ is Borel, the restriction
\[
\mathcal H \mres D(A):=\mathcal H(A\cap D),\quad A\subset X,
\]
is a Borel regular outer measure by \cite[Lemma 3.3.13]{HKSTbook}.

\begin{lemma}\label{lem:density}
Let $D\subset X$ be a Borel set
with $\mathcal H(D)<\infty$. Then for $\mathcal H$-a.e. $x\in X\setminus D$, we have
\[
\lim_{r\to 0}r\frac{\mathcal H(D\cap B(x,r))}{\mu(B(x,r))}=0.
\]
\end{lemma}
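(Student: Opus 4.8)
The plan is to view this as a standard density estimate for a finite Borel measure against the ambient doubling measure $\mu$, carried out at the level of codimension one. Set $\nu:=\mathcal H\mres D$, which by the remark preceding the lemma is a Borel regular outer measure; moreover $\nu(X)=\mathcal H(D)<\infty$, so it is a finite measure. For $\lambda>0$ define the exceptional set
\[
E_\lambda:=\Bigl\{x\in X\setminus D\colon \limsup_{r\to 0} r\,\frac{\nu(B(x,r))}{\mu(B(x,r))}>\lambda\Bigr\}.
\]
It suffices to show $\mathcal H(E_\lambda)=0$ for every $\lambda>0$, since then the countable union $\bigcup_{k}E_{1/k}$ is $\mathcal H$-null, and outside it the claimed limit holds (the liminf is trivially $\ge 0$). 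First I would record that $\nu(X\setminus D)=0$: indeed $\nu(A)=\mathcal H(A\cap D)$, and for $A\subset X\setminus D$ the set $A\cap D$ is empty, so $\nu(X\setminus D)=0$. In particular $\nu(E_\lambda)=0$ because $E_\lambda\subset X\setminus D$.

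Next comes the covering argument, which is the heart of the matter. Fix $\lambda>0$ and $\eps>0$, and also fix a scale bound $R>0$. Since $\nu$ is a finite Borel regular measure and $\nu(E_\lambda)=0$, we may choose an open set $G\supset E_\lambda$ with $\nu(G)<\eps$. For each $x\in E_\lambda$ there are arbitrarily small radii $r>0$ with $B(x,r)\subset G$, $r<R$, and
\[
r\,\frac{\nu(B(x,r))}{\mu(B(x,r))}>\lambda,
\quad\textrm{equivalently}\quad
\frac{\mu(B(x,r))}{r}<\frac1\lambda\,\nu(B(x,r)).
\]
The collection of all such balls is a fine cover of $E_\lambda$, so by the basic $5r$-covering theorem (in the form valid in doubling metric spaces; see \cite[Section 1.1]{HKSTbook}) there is a countable pairwise disjoint subfamily $\{B(x_i,r_i)\}_i$ with $E_\lambda\subset\bigcup_i B(x_i,5r_i)$ and each $B(x_i,r_i)\subset G$, $r_i<R$. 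Then, using the definition \eqref{eq:Hausdorff content} of the Hausdorff content, doubling of $\mu$ (applied $\lceil\log_2 5\rceil$ times to compare $\mu(B(x_i,5r_i))$ with $\mu(B(x_i,r_i))$), the selection inequality, and finally disjointness together with $\bigcup_i B(x_i,r_i)\subset G$,
\[
\mathcal H_{5R}(E_\lambda)
\le \sum_i \frac{\mu(B(x_i,5r_i))}{5r_i}
\le \frac{C_d^{\lceil\log_2 5\rceil}}{5}\sum_i \frac{\mu(B(x_i,r_i))}{r_i}
\le \frac{C_d^{\lceil\log_2 5\rceil}}{5\lambda}\sum_i \nu(B(x_i,r_i))
\le \frac{C_d^{\lceil\log_2 5\rceil}}{5\lambda}\,\nu(G)
< \frac{C_d^{\lceil\log_2 5\rceil}}{5\lambda}\,\eps.
\]
Letting $\eps\to 0$ gives $\mathcal H_{5R}(E_\lambda)=0$ for every $R>0$, hence $\mathcal H(E_\lambda)=\lim_{R\to 0}\mathcal H_R(E_\lambda)=0$, as desired.

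The only delicate point is making sure the $5r$-covering theorem is applicable: the radii $r_i$ are uniformly bounded (by $R$) and $\mu$ is doubling, so every such fine family of balls admits a disjointed subfamily with the stated $5$-enlargement covering property; this is exactly the standard statement used repeatedly in this setting. I expect this covering step to be the main (indeed only) real obstacle, and it is entirely routine. Everything else — that $\nu$ is a finite Borel regular measure, that $\nu(E_\lambda)=0$ because $E_\lambda$ misses $D$, and the outer-regularity choice of $G$ — follows from what has already been set up, in particular the remark before the lemma citing \cite[Lemma 3.3.13]{HKSTbook}. One should only take a little care that the argument is phrased purely with the content $\mathcal H_R$ (which is all that appears under the infimum defining $\mathcal H$), so that no measurability or exact-measure subtleties enter.
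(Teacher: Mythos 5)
Your argument is correct and is exactly the standard covering/density argument that the paper invokes by reference (it gives no proof of its own, citing \cite[Lemma 2.6]{KKST-P} together with the Borel regularity of $\mathcal H\mres D$ from \cite[Lemma 3.3.13]{HKSTbook}): restrict $\mathcal H$ to $D$, note the restricted measure vanishes on the exceptional set, and kill that set's Hausdorff content via outer regularity, the $5r$-covering theorem, doubling, and the density lower bound. The only point worth a half-sentence of care, which you essentially handle, is that outer regularity is applied to the possibly non-Borel set $E_\lambda$ via a Borel null hull, which Borel regularity and finiteness of $\mathcal H\mres D$ provide.
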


\section{Proof of Theorem \ref{thm:removability theorem}}\label{sec:removability}

In this section we prove the removability Theorem \ref{thm:removability theorem},
as well as a version for BV functions.

We define removable sets for Newton-Sobolev and BV functions as follows.
As usual, we consider $1\le p<\infty$.

\begin{definition}
	Let $A\subset X$ such that $\mu(A)=0$.
	We say that $A$ is removable for $N^{1,p}(X)$ if for
	every $u\in N^{1,p}(X\setminus A)$ there exists $v\in N^{1,p}(X)$ such that $v=u$ a.e. in
	$X$. We say that $A$ is isometrically removable for $N^{1,p}(X)$
	if moreover
	$g_v=g_u$ a.e. in $X$ for every such $v$.
\end{definition}

Note that more precisely, above we denote the minimal $p$-weak upper gradient $g_u=g_{u,X\setminus A}$.
If $g_v=g_u$ a.e. in $X$ for \emph{some} $v$ as above, then it is in fact true for \emph{every}
such $v$, by \eqref{eq:ae to qe}.
Note also that always $g_v\ge g_{v,X\setminus A} =g_{u,X\setminus A}$ a.e. by \eqref{eq:ae to qe}.

\begin{definition}
	Let $A\subset X$ be closed such that $\mu(A)=0$.
	We say that $A$ is removable for $\BV(X)$ if for
	every $u\in \BV(X\setminus A)$ we have $u\in\BV(X)$.
	We say that $A$ is isometrically removable for $\BV(X)$
	if moreover
	$\Vert Du\Vert(X)=\Vert Du\Vert(X\setminus A)$.
\end{definition}

Note that we understand BV functions to be defined only almost everywhere, and so it is not necessary
to talk about an extension $v$ here.
Moreover, note that we assume $A$ to be closed, because $\BV$ functions are defined only in open sets.

The cornerstone of our removability results is the following proposition, which essentially
says that $A$ is removable for sets of finite perimeter.

\begin{proposition}\label{prop:finite perimeter removability}
	Let $A\subset X$ be a closed set such that
	\begin{equation}\label{eq:H a.e. condition}
	\liminf_{r\to 0}\frac{\rcapa_1(A\cap B(x,r),B(x,2r))}{\rcapa_1(B(x,r),B(x,2r))}<c_*
	\quad\textrm{for 1-q.e. }x\in A.
	\end{equation}
	Let $E\subset X$ be a $\mu$-measurable set with
	$\mathcal H(\partial^* E\setminus A)<\infty$.
	Then $\mathcal H(\Sigma_{\beta}E\cap A)=0$.
\end{proposition}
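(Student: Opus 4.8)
The plan is to combine a pointwise analysis at a single point with a negligibility principle. It suffices to prove that \emph{no} point $x\in X$ can simultaneously satisfy all three of: \textbf{(a)} $\theta_*(E,x)\ge\beta$ and $\theta_*(X\setminus E,x)\ge\beta$; \textbf{(b)} $\liminf_{r\to0}\rcapa_1(A\cap B(x,r),B(x,2r))/\rcapa_1(B(x,r),B(x,2r))<c_*$; \textbf{(c)} $\lim_{r\to0}r\,\mathcal H((\partial^*E\setminus A)\cap B(x,r))/\mu(B(x,r))=0$. Indeed, (a) holds at every point of $\Sigma_\beta E\cap A$ by definition of the strong boundary; (b) holds at $1$-q.e., hence by \eqref{eq:capa and H measure} at $\mathcal H$-a.e., point of $A$; and since $\partial^*E\setminus A$ is a Borel set with $\mathcal H(\partial^*E\setminus A)<\infty$ and $A\subset X\setminus(\partial^*E\setminus A)$, Lemma \ref{lem:density} gives (c) at $\mathcal H$-a.e. point of $A$. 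Hence $\mathcal H$-a.e. point of $\Sigma_\beta E\cap A$ satisfies (a)--(c), so if no point does, then $\mathcal H(\Sigma_\beta E\cap A)=0$.

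So I would fix $x$ satisfying (a)--(c) and derive a contradiction. Choose $r_j\downarrow0$ realizing the liminf in (b) (and small enough for all estimates below). Since $A$ is closed, the outer capacity property of $\rcapa_1$ together with a near-minimizer and truncation produces $w_j\in N_0^{1,1}(B(x,2r_j))$ with $0\le w_j\le1$, equal to $1$ on an \emph{open} set $V_j$ satisfying $A\cap B(x,r_j)\subset V_j\subset B(x,2r_j)$, and with $\int_X g_{w_j}\,d\mu<c_*C_d\,\mu(B(x,r_j))/r_j$ by \eqref{eq:capa basic estimate}. Applying the coarea formula \eqref{eq:coarea} to $w_j$, I pick $t_j\in(\tfrac13,\tfrac23)$ with $P(\{w_j>t_j\},X)$ bounded by a fixed multiple of $\int_X g_{w_j}\,d\mu$, and set $G_j:=\{w_j>t_j\}$. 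Then $G_j$ is $\mu$-measurable with $A\cap B(x,r_j)\subset V_j\subset G_j\subset B(x,2r_j)$, with $P(G_j,X)$ bounded by a multiple of $c_*C_d\,\mu(B(x,r_j))/r_j$, and, by \eqref{eq:rel isop ineq application} (or by \eqref{eq:measure capa comparison}), with $\mu(G_j)$ bounded by a structural constant times $c_*\,\mu(B(x,r_j))$.

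Next I would examine $F_j:=E\cup G_j$. For each $y\in A\cap B(x,r_j)$ the open set $V_j\subset F_j$ is a neighborhood of $y$, so $\theta^*(X\setminus F_j,y)=0$ and $y\notin\partial^*F_j$; hence $\partial^*F_j\cap B(x,r_j)\subset B(x,r_j)\setminus A$. Combined with the elementary inclusion $\partial^*F_j\subset\partial^*E\cup\partial^*G_j$, this gives
\[
\mathcal H(\partial^*F_j\cap B(x,r_j))\le\mathcal H((\partial^*E\setminus A)\cap B(x,r_j))+\mathcal H(\partial^*G_j\cap B(x,r_j)).
\]
The first term is $o(1)\cdot\mu(B(x,r_j))/r_j$ by (c); the second is at most $\alpha^{-1}P(G_j,X)$ by \eqref{eq:def of theta}, hence again controlled by $c_*C_d\,\mu(B(x,r_j))/r_j$. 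In particular $\mathcal H(\partial^*F_j\cap B(x,r_j))<\infty$, so Federer's characterization \cite[Theorem 1.1]{L-Fedchar} on the open ball $B(x,r_j)$ gives $P(F_j,B(x,r_j))<\infty$, and then \eqref{eq:def of theta} yields $P(F_j,B(x,r_j))\le C_d\,\mathcal H(\partial^*F_j\cap B(x,r_j))\le(o(1)+\text{const}\cdot c_*)\,\mu(B(x,r_j))/r_j$. Invoking Federer's characterization here is essential, since $E$, and hence $F_j$, is not assumed to have locally finite perimeter near $A$.

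For the contradiction I would turn to the relative isoperimetric inequality. Since $\mu(G_j)$ is only a tiny fraction of $\mu(B(x,r_j/\lambda))$ (this is where $c_*$ being small enters) and $\theta_*(E,x),\theta_*(X\setminus E,x)\ge\beta$, for large $j$ both $\mu(B(x,r_j/\lambda)\cap F_j)$ and $\mu(B(x,r_j/\lambda)\setminus F_j)$ exceed $\tfrac{\beta}{2}\mu(B(x,r_j/\lambda))$; then \eqref{eq:relative isoperimetric inequality} applied on $B(x,r_j/\lambda)$ forces $P(F_j,B(x,r_j))\gtrsim\mu(B(x,r_j))/r_j$. Comparing this with the upper bound above and letting $j\to\infty$ gives an inequality of the shape $\tfrac{\beta}{2}\le(\text{const})\cdot c_*$, and the value of $c_*$ in \eqref{eq:c star definition} is calibrated --- through $\alpha$, $\beta$, $C_d$, $\lambda$, $C_P$, $C_{\mathrm{sob}}$, $L$ --- precisely so that this fails. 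I expect the crux, and the main obstacle, to be exactly this construction: extracting from the purely capacitary thinness of $A$ at one point a set $G_j$ of small finite perimeter and macroscopically small measure that still engulfs a neighborhood of $A\cap B(x,r_j)$, so that $F_j$ has its essential boundary pushed entirely off $A$ near $x$; the accompanying bookkeeping of the structural constants is routine but must be carried out with care.
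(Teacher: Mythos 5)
Your proposal is correct and follows essentially the same argument as the paper's: a near-minimizer for $\rcapa_1(A\cap B(x,r_j),B(x,2r_j))$ plus the coarea formula produces a small-perimeter set engulfing an open neighborhood of $A\cap B(x,r_j)$, its union with $E$ has essential boundary pushed off $A$ inside $B(x,r_j)$, and the relative isoperimetric inequality at a point of $\Sigma_\beta E$ is played against Lemma \ref{lem:density} (the paper arranges the same computation as a positive lower bound on $\liminf_j r_j\,\mathcal H((\partial^*E\setminus A)\cap B(x,r_j))/\mu(B(x,r_j))$, and it needs the same implicit appeal to Federer's characterization that you make explicit). The only adjustment needed is cosmetic: pick the coarea level $t_j$ anywhere in $(0,1)$ rather than in $(\tfrac13,\tfrac23)$, so that $P(G_j,X)\le \rcapa_1(A\cap B(x,r_j),B(x,2r_j))+\mu(B(x,r_j))$ without the factor $3$, which keeps the bookkeeping compatible with the specific constant $c_*$ of \eqref{eq:c star definition}.
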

\begin{proof}
	The inequality in \eqref{eq:H a.e. condition} holds outside a set $N\subset A$ with $\capa_1(N)=0$,
	and thus also $\mathcal H(N)=0$ by \eqref{eq:capa and H measure}.
Fix $x\in \Sigma_{\beta}E\cap A\setminus N$ (assuming it exists).
There is a sequence $r_j\searrow 0$ such that
\begin{equation}\label{eq:choice of rj}
\limsup_{j\to\infty}\frac{\rcapa_1(A\cap B(x,r_j),B(x,2r_j))}{\rcapa_1(B(x,r_j),B(x,2r_j))}<c_*.
\end{equation}
By the definition of the variational $1$-capacity, and recalling that it is an outer capacity,
for every $r>0$ we find a function $v_r\in N^{1,1}_0(B(x,2r))$ such that
$v_r\ge 1$ in a neighborhood of $A\cap B(x,r)$, and
\[
\int_X g_{v_r}\,d\mu\le \rcapa_1(A\cap B(x,r),B(x,2r))+\mu(B(x,r)).
\]
Here $v_r\in N^{1,1}(X)\subset  \BV(X)$ with $\Vert Dv_r\Vert(X)\le \int_X g_{v_r}\,d\mu$,
and then by the coarea formula \eqref{eq:coarea} we find
a set $A(x,r):=\{v_r>t\}$\label{Axr}
for some $0<t<1$, for which
\begin{equation}\label{eq:Axr choice}
	P(A(x,r),X)\le \Vert Dv_r\Vert(X) \le \rcapa_1(A\cap B(x,r),B(x,2r))+\mu(B(x,r)),
\end{equation}
and also $A(x,r)\subset B(x,2r)$, and $A\cap B(x,r)$ is contained in the interior
of $A(x,r)$.
By \eqref{eq:def of theta},
\begin{equation}\label{eq:perimeter small estimate}
	\begin{split}
		\limsup_{j\to\infty}r_j\frac{\mathcal H(\partial^* A(x,r_j))}{\mu(B(x,r_j))}
		&\le \frac{1}{\alpha}\limsup_{j\to\infty}r_j\frac{P(A(x,r_j),X)}{\mu(B(x,r_j))}\\
		&\le \frac{1}{\alpha}\limsup_{j\to\infty}r_j\frac{\rcapa_1(A\cap B(x,r_j),B(x,2r_j))}{\mu(B(x,r_j))}
		\quad\textrm{by }\eqref{eq:Axr choice}\\
		&\le \frac{C_d}{\alpha}\limsup_{j\to\infty}\frac{\rcapa_1(A\cap B(x,r_j),B(x,2r_j))}
		{\rcapa_1(B(x,r_j),B(x,2r_j))}\quad\textrm{by }\eqref{eq:capa basic estimate}\\
		&\le \frac{C_d c_*}{\alpha}
		\quad \textrm{by }\eqref{eq:choice of rj}\\
		&\le \frac{\beta}{8 C_P C_d^{\lceil\log_2 \lambda\rceil}}\quad \textrm{by }\eqref{eq:c star definition}.
	\end{split}
\end{equation}
By the doubling property of $\mu$,
\begin{align*}
	\limsup_{j\to \infty}\frac{\mu( A(x, r_j))}{\mu(B(x,r_j/\lambda))}
	& \le C_d^{\lceil\log_2 \lambda\rceil}\limsup_{j\to \infty}\frac{\mu(A(x, r_j))}{\mu(B(x,r_j))}\\
	& \le 2C_P L C_d^{\lceil\log_2 \lambda\rceil}\limsup_{j\to \infty}r_j\frac{P(A(x,r_j),X)}{\mu(B(x,r_j))}
	\quad\textrm{by }\eqref{eq:rel isop ineq application}\\
	& \le 2C_P  L C_d^{1+\lceil\log_2 \lambda\rceil} c_*
	\quad\textrm{by }\eqref{eq:perimeter small estimate}\textrm{ middle 3 inequalities}\\
	& \le \beta/2\quad \textrm{by }\eqref{eq:c star definition}.
\end{align*}
Combining this with the fact that $x\in \Sigma_{\beta}E$, we get
\begin{align*}
\liminf_{j\to\infty}\frac{\mu(B(x,r_j/\lambda)\setminus (E\cup A(x,r_j)))}{\mu(B(x,r_j))}
&\ge \frac{1}{C_d^{\lceil\log_2 \lambda\rceil}}\liminf_{j\to\infty}\frac{\mu(B(x,r_j/\lambda)
	\setminus (E\cup A(x,r_j)))}{\mu(B(x,r_j/\lambda ))}\\
&\ge \frac{\beta}{2C_d^{\lceil\log_2 \lambda\rceil}},
\end{align*}
and using again the fact that $x\in \Sigma_{\beta}E$, we have in total
\begin{equation}\label{eq:beta over two}
	\begin{split}
	&\liminf_{j\to\infty}\frac{\mu(B(x,r_j/\lambda)\cap (E\cup A(x,r_j)))}{\mu(B(x,r_j))}
	\ge \frac{\beta}{C_d^{\lceil\log_2 \lambda\rceil}}\quad \textrm{and}\\
	&\qquad \qquad \qquad \liminf_{j\to\infty}\frac{\mu( B(x,r_j/\lambda)\setminus (E\cup A(x,r_j)))}{\mu(B(x,r_j))}
	\ge \frac{\beta}{2C_d^{\lceil\log_2 \lambda\rceil}}.
	\end{split}
\end{equation}
Thus by \eqref{eq:relative isoperimetric inequality} and \eqref{eq:def of theta},
\begin{equation}\label{eq:E cup A boundary}
\begin{split}
	\liminf_{j\to\infty}r_j\frac{\mathcal H(\partial^*(E\cup A(x, r_j))\cap B(x,r_j))}{\mu(B(x,r_j))}
	\ge \frac{ \beta}{4 C_P C_d^{\lceil\log_2 \lambda\rceil}}.
\end{split}
\end{equation}
From the definition of the measure-theoretic interior and boundary
\eqref{eq:measure theoretic interior}, \eqref{eq:measure theoretic boundary},
it is straightforward to verify that
\begin{align*}
	\partial^*(E\cup A(x,r_j))\cap B(x, r_j)
	&\subset \Big[(\partial^*E\setminus I_{A(x,r_j)})\cup\partial^*A(x,r_j)\Big]\cap B(x,r_j )\\
	&\subset \Big[(\partial^*E\setminus A)\cup\partial^*A(x,r_j)\Big]\cap B(x, r_j).
\end{align*}
Thus
\begin{align*}
&\liminf_{j\to\infty}r_j\frac{\mathcal H((\partial^*E\setminus A)\cap B(x,r_j))}{\mu(B(x,r_j))}\\
&\qquad \ge \liminf_{j\to\infty}r_j\frac{\mathcal H(\partial^*(E\cup A(x,r_j))\cap B(x, r_j))}
{\mu(B(x,r_j))}-
\limsup_{j\to\infty}r_j\frac{\mathcal H(\partial^*A(x,r_j)\cap B(x,r_j))}{\mu(B(x,r_j))}\\
&\qquad\ge \frac{ \beta}{8 C_P C_d^{\lceil\log_2 \lambda\rceil}}>0
\quad\textrm{by }\eqref{eq:perimeter small estimate},\eqref{eq:E cup A boundary}.
\end{align*}
Recall that we have considered an arbitrary point $x\in \Sigma_{\beta}E\cap A\setminus N$.
However, since $\partial^*E\setminus A$ is a Borel set, by Lemma \ref{lem:density} we have that
\[
\lim_{r\to 0}r\frac{\mathcal H((\partial^*E\setminus A)\cap B(x,r))}{\mu(B(x,r))}=0
\]
for $\mathcal H$-a.e. $x\in X\setminus (\partial^*E\setminus A)$, in
particular for $\mathcal H$-a.e. $x\in A$.
It follows that $\mathcal H(A\cap \Sigma_{\beta}E\setminus N)=0$,
and thus in fact $\mathcal H(A\cap \Sigma_{\beta}E)=0$.
\end{proof}

Now we can prove the following removability result for BV functions.

\begin{corollary}\label{cor:BV}
Suppose $A\subset X$ is a closed set such that
\[
\liminf_{r\to 0}\frac{\rcapa_1(A\cap B(x,r),B(x,2r))}{\rcapa_1(B(x,r),B(x,2r))}<c_*
\quad\textrm{for 1-q.e. }x\in A.
\]
Then $A$ is isometrically removable for $\BV(X)$.
\end{corollary}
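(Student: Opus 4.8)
The plan is to deduce Corollary \ref{cor:BV} from Proposition \ref{prop:finite perimeter removability} together with the new Federer-type characterization (Theorem \ref{thm:new Federer}) and the coarea formula \eqref{eq:coarea}. Let $A\subset X$ be closed satisfying the capacitary density hypothesis. First I would note that $\mu(A)=0$ by Lemma \ref{lem:zero measure} applied with the hypothesis restricted to $A$ (points outside $A$ trivially satisfy it since $A\cap B(x,r)=\emptyset$ for small $r$ when $x\notin A$, as $A$ is closed), so $A$ is a valid candidate for removability. Fix $u\in\BV(X\setminus A)$. Because $\mu(A)=0$, we may regard $u$ as an $L^1_{\loc}$ function on all of $X$, and the task is to show $\Vert Du\Vert(X)<\infty$ with $\Vert Du\Vert(X)=\Vert Du\Vert(X\setminus A)$.

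The key step is to handle super-level sets. For a.e.\ $t\in\R$, the set $E_t:=\{u>t\}$ has finite perimeter in the open set $X\setminus A$, so $\mathcal H(\partial^*E_t\setminus A)=\mathcal H(\partial^*E_t\cap(X\setminus A))<\infty$ by \eqref{eq:def of theta}. Proposition \ref{prop:finite perimeter removability} then gives $\mathcal H(\Sigma_\beta E_t\cap A)=0$, and combined with $\mathcal H(\partial^*E_t\cap(X\setminus A))<\infty$ this yields $\mathcal H(\Sigma_\beta E_t\cap X)<\infty$; hence Theorem \ref{thm:new Federer} (applied with $\Om=X$) shows $P(E_t,X)<\infty$. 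Moreover $P(E_t,X)=P(E_t,X\setminus A)$: by \eqref{eq:perimeter and strong boundary} and \eqref{eq:def of theta} the perimeter measure $P(E_t,\cdot)$ is absolutely continuous with respect to $\mathcal H\mres\Sigma_\gamma E_t$, and $\mathcal H(\Sigma_\gamma E_t\cap A)=0$ since $\beta\le\gamma$ and $\Sigma_\gamma E_t\subset\Sigma_\beta E_t$ forces $\mathcal H(\Sigma_\gamma E_t\cap A)\le\mathcal H(\Sigma_\beta E_t\cap A)=0$; thus $P(E_t,A)=0$ and the perimeter does not see the removed set.

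Finally I would integrate in $t$ using the coarea formula \eqref{eq:coarea}, once in the open set $X\setminus A$ and once in $X$. Since $u\in\BV(X\setminus A)$, the left side $\Vert Du\Vert(X\setminus A)=\int_{-\infty}^\infty P(E_t,X\setminus A)\,dt$ is finite, so the integrand $t\mapsto P(E_t,X\setminus A)$ is finite for a.e.\ $t$ and integrable; by the previous paragraph it equals $t\mapsto P(E_t,X)$ for a.e.\ $t$. Therefore $\int_{-\infty}^\infty P(E_t,X)\,dt=\Vert Du\Vert(X\setminus A)<\infty$, and applying \eqref{eq:coarea} in $X$ gives $\Vert Du\Vert(X)=\int_{-\infty}^\infty P(E_t,X)\,dt=\Vert Du\Vert(X\setminus A)$. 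Hence $u\in\BV(X)$ with equal total variations, i.e.\ $A$ is isometrically removable for $\BV(X)$.

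The main obstacle is the clean passage between the perimeter in $X$ and in the open set $X\setminus A$, and in particular verifying that the hypothesis of Theorem \ref{thm:new Federer} is met, namely that the (relevant) strong boundary of each level set $E_t$ has finite $\mathcal H$-measure \emph{in all of} $X$. This requires carefully combining the finiteness $\mathcal H(\partial^*E_t\cap(X\setminus A))<\infty$ coming from $u\in\BV(X\setminus A)$ with the zero-measure conclusion $\mathcal H(\Sigma_\beta E_t\cap A)=0$ from Proposition \ref{prop:finite perimeter removability}, using the inclusion $\Sigma_\beta E_t\subset\partial^*E_t$ and the decomposition of $X$ into $I_{E_t}$, $O_{E_t}$, $\partial^*E_t$. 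One must also take care that these statements hold simultaneously for a.e.\ $t$, which is immediate since each holds for a.e.\ $t$ and countable/co-null intersections of full-measure sets of parameters are full-measure.
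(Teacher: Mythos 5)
Your proof is correct and uses essentially the same ingredients and structure as the paper: the coarea formula \eqref{eq:coarea}, Proposition~\ref{prop:finite perimeter removability}, the representation \eqref{eq:def of theta}, and the strong-boundary bound \eqref{eq:perimeter and strong boundary} coming from Theorem~\ref{thm:new Federer}. The only difference is organizational: you show levelwise that $P(\{u>t\},X)<\infty$ and $P(\{u>t\},A)=0$ and then integrate once, whereas the paper first integrates the chain of inequalities to get $u\in\BV(X)$ and then applies coarea a second time to conclude $\Vert Du\Vert(A)=0$.
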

\begin{proof}
Let $u\in \BV(X\setminus A)$.
By Lemma \ref{lem:zero measure}, we have $\mu(A)=0$,
and so $\Vert u\Vert_{L^1(X)}=\Vert u\Vert_{L^1(X\setminus A)}$.
Using the coarea formula \eqref{eq:coarea}, and noting that some integrals below are upper integrals
because measurability is not clear, we estimate
\begin{align*}
		\Vert Du\Vert(X\setminus A)
		&= \int_{-\infty}^{\infty}P(\{u>t\}, X\setminus A)\,dt\\
		&\ge \alpha \int_{(-\infty,\infty)}^*\mathcal H(\partial^*\{u>t\}\cap (X\setminus A))\,dt
		\quad\textrm{by }\eqref{eq:def of theta}\\
		&\ge \alpha\int_{(-\infty,\infty)}^*\mathcal H(\Sigma_{\beta}\{u>t\})\,dt\quad
		\textrm{by Proposition \ref{prop:finite perimeter removability}}\\
		&\ge \alpha C_d^{-1}\int_{(-\infty,\infty)}^*P(\{u>t\},X)\,dt\quad\textrm{by }
		\eqref{eq:def of theta}\\
		&= \alpha C_d^{-1}\Vert Du\Vert(X).
\end{align*}
Thus $u\in\BV(X)$.
Note also that from the above estimates, we obtain that
$\mathcal H(\partial^*\{u>t\}\setminus A)<\infty$ for a.e. $t\in\R$.
By again applying the coarea formula \eqref{eq:coarea}, we get
\begin{align*}
\Vert Du\Vert(A)
&=\int_{-\infty}^{\infty}P(\{u>t\},A)\,dt\\
&\le C_d \int_{-\infty}^{\infty}\mathcal H(\Sigma_{\beta}\{u>t\}\cap A)\,dt
\quad\textrm{by }\eqref{eq:def of theta}\\
&=0
\end{align*}
by Proposition \ref{prop:finite perimeter removability}.
Thus in fact $\Vert Du\Vert(X\setminus A)=\Vert Du\Vert(X)$, meaning that
$A$ is isometrically removable for $\BV(X)$.
\end{proof}

\begin{remark}
Note that here we assumed $A$ to be closed, which made the proof rather straightforward.
As mentioned before, the class of BV functions is only defined in open sets, unlike Newton--Sobolev functions
which are natural to define in any $\mu$-measurable set.
One could consider suitable definitions of the BV class also in non-open sets, but we choose not to
go in this direction here.
\end{remark} 

Now we prove the following theorem, which involves (an improved version of) the case $p=1$ of our
main Theorem \ref{thm:removability theorem}.

\begin{theorem}\label{thm:removability in text}
	Suppose $A\subset X$ is such that
	\[
	\liminf_{r\to 0}\frac{\rcapa_1(A\cap B(x,r),B(x,2r))}{\rcapa_1(B(x,r),B(x,2r))}<c_*
	\quad\textrm{for 1-q.e. }x\in X.
	\]
	Then $A$ isometrically removable for $N^{1,1}(X)$.
\end{theorem}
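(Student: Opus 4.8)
The plan is to reduce the statement about Newton--Sobolev functions to the perimeter-removability statement already established in Proposition \ref{prop:finite perimeter removability}, via the coarea formula, exactly as was done for BV functions in Corollary \ref{cor:BV}, but with extra care because now $A$ need not be closed. First I would let $u\in N^{1,1}(X\setminus A)$ and set $v:=u$ on $X\setminus A$; since $\mu(A)=0$ by Lemma \ref{lem:zero measure}, $v$ is well-defined a.e.\ and lies in $L^1(X)$ with $\Vert v\Vert_{L^1(X)}=\Vert u\Vert_{L^1(X\setminus A)}$. The goal is to produce a pointwise-defined representative in $N^{1,1}(X)$ with minimal $1$-weak upper gradient no larger than $g_{u,X\setminus A}$ (the reverse inequality $g_{v,X}\ge g_{v,X\setminus A}=g_{u,X\setminus A}$ a.e.\ is automatic by \eqref{eq:ae to qe}, so isometric removability follows once we have the upper bound).

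The key step is to bound the total variation of $u$ over $X$. Using the coarea formula \eqref{eq:coarea} in $X\setminus A$, together with \eqref{eq:def of theta}, Proposition \ref{prop:finite perimeter removability} (applied with this $A$, whose density hypothesis holds $1$-q.e.\ on $A$ since it holds $1$-q.e.\ on all of $X$), and \eqref{eq:perimeter and strong boundary}, one gets
\[
\Vert Du\Vert(X\setminus A)\ge \alpha C_d^{-1}\int_{(-\infty,\infty)}^* P(\{u>t\},X)\,dt=\alpha C_d^{-1}\Vert Du\Vert(X),
\]
so $v\in\BV(X)$ and $\Vert Dv\Vert(X)$ is finite. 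Moreover $g_{u,X\setminus A}\in L^1(X\setminus A)$ is a $1$-weak upper gradient of $u$ in $X\setminus A$, so $\Vert Du\Vert(X\setminus A)\le\int_{X\setminus A}g_{u,X\setminus A}\,d\mu$, and one checks $\Vert Dv\Vert$ is absolutely continuous with respect to $\mu$ (it is dominated, via coarea and \eqref{eq:def of theta}, \eqref{eq:perimeter and strong boundary}, and Proposition \ref{prop:finite perimeter removability} applied to level sets, by the restriction of the integrable density $g_{u,X\setminus A}$). Then \eqref{eq:from BV to Sobolev} gives $v^*\in N^{1,1}(X)$ with $\int_X g_{v^*}\,d\mu\le C\Vert Dv\Vert(X)$. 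This shows $v^*$ is an extension, so $A$ is removable; but it does not yet give the \emph{isometric} conclusion, because \eqref{eq:from BV to Sobolev} carries a dimensional constant $C$.

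To upgrade to the isometric statement, instead of applying \eqref{eq:from BV to Sobolev} directly I would work set by set on a suitable exhaustion. By Lemma \ref{lem:capacities} it suffices to produce, for a sequence of sets $D_j$ with $\capa_1(D_j)\to 0$, a pointwise representative $v$ of $u$ lying in $N^{1,1}(X\setminus D_j)$ with $\int_{X\setminus D_j}g_{v,X\setminus D_j}\,d\mu\le\int_{X\setminus A}g_{u,X\setminus A}\,d\mu+o(1)$. A natural choice is to take $D_j$ to be small open (or $1$-quasiopen) neighborhoods of the exceptional capacity-zero set $N\subset X$ where the $\liminf$ hypothesis fails, together with the non-Lebesgue-point set of $u$; outside such $D_j$ the set $A\setminus D_j$ still satisfies the density condition $1$-q.e.\ on itself, and one can hope that $X\setminus D_j$ (or $X\setminus(A\cup D_j)$) behaves well. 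Here the cleanest route may be: for $1$-q.e.\ $x$ the limit in \eqref{eq:Lebesgue point result} holds, and by \eqref{eq:A minus b 1 A}, \eqref{eq:finely and quasiopen} the set $X\setminus A$ differs from a $1$-finely open set by a capacity-zero set, so by \eqref{eq:ug in finely open} $g_{u,X\setminus A}$ is unaffected by removing/adding such negligible sets; combined with the Federer-type input this should let the constant be taken to be $1$. The main obstacle I anticipate is precisely this last point: controlling the minimal upper gradient of the \emph{extension} across $A$ without losing a constant, i.e.\ verifying that the candidate $g:=g_{u,X\setminus A}$ (extended by $0$, or by $g_{v^*}$) is actually a $1$-weak upper gradient of $v^*$ on all of $X$. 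This requires showing that $1$-a.e.\ curve in $X$ either avoids $A$ up to a length-null set (which fails since $\mathcal H(A)$ need not be zero), or — more subtly — that along $1$-a.e.\ curve the function $v^*$ is absolutely continuous with the correct derivative bound even where the curve meets $A$; the perimeter estimate from Proposition \ref{prop:finite perimeter removability} is the quantitative substitute, saying the level sets of $u$ have no strong boundary inside $A$, which by the coarea formula forces $\Vert Dv\Vert(A)=0$ and hence (via absolute continuity and a Fuglede-type argument) that the upper gradient does not "see" $A$.
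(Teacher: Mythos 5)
Your overall strategy (coarea formula plus Proposition \ref{prop:finite perimeter removability} plus \eqref{eq:from BV to Sobolev}, then repair the constant) is in the right spirit, but two steps have genuine gaps. First, you apply the BV machinery on $X\setminus A$ and Proposition \ref{prop:finite perimeter removability} to $A$ itself, while $A$ is here an arbitrary set: the proposition is stated only for \emph{closed} $A$ (and its proof uses closedness, e.g.\ through outer regularity of $\rcapa_1$ and the inclusion of $A\cap B(x,r)$ in the interior of $A(x,r)$), and the total variation, perimeter and coarea formula are defined and valid only on open sets. In particular $\Vert Du\Vert(X\setminus A)$ is, by the paper's definition, an infimum over open supersets of $X\setminus A$; since the capacitary density hypothesis does not prevent $A$ from being dense, the only relevant open superset may be essentially $X$, so your key bound $\Vert Du\Vert(X\setminus A)\le\int_{X\setminus A}g_{u,X\setminus A}\,d\mu$ is unjustified and the first display becomes circular. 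The missing idea is exactly the paper's cutoff construction: reduce to the case where $X\setminus A$ is $1$-finely open by replacing $A$ with $\overline{A}^1$ (harmless by \eqref{eq:capa of fine closure}), choose an open $G$ of small capacity with $(X\setminus A)\cup G$ open (so $A\setminus G$ is closed), truncate $u$ to be bounded, set $v=(1-\eta)u$ with $\eta$ from Lemma \ref{lem:capacity and Newtonian function}, and prove by a Lebesgue-number-lemma patching argument along $1$-a.e.\ curve that $v\in N^{1,1}((X\setminus A)\cup G)$ with $g_v\le g_u+Mg_\eta$ (using \eqref{eq:Leibniz rule}); only then can the coarea/Federer estimate, the absolute continuity of $\Vert Dv\Vert$, and \eqref{eq:from BV to Sobolev} be run on an open set, with the limits $\eps\to 0$ and $M\to\infty$ handled by Lemma \ref{lem:capacities} and \cite[Proposition 2.4]{BB}.

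Second, your plan for the isometric statement aims at the wrong target. You do not need a constant-free integral bound, nor a Fuglede-type verification that $g_{u,X\setminus A}$ is an upper gradient of $v^*$ across $A$. The constant $C$ in \eqref{eq:from BV to Sobolev} is only used to establish \emph{membership} $v\in N^{1,1}(X)$; once that is known, isometry is soft: since $X\setminus\overline{A}^1$ is $1$-finely open and has full measure, \eqref{eq:ug in finely open} and \eqref{eq:ae to qe} give $g_{v,X}=g_{v,X\setminus\overline{A}^1}=g_{u,X\setminus\overline{A}^1}$ a.e., and sandwiching with $g_{v,X}\ge g_{v,X\setminus A}=g_{u,X\setminus A}\ge g_{u,X\setminus\overline{A}^1}$ a.e.\ yields $g_{v,X}=g_{u,X\setminus A}$ a.e. This is how the paper concludes, and it replaces the entire last paragraph of your proposal, whose route via $\Vert Dv\Vert(A)=0$ and curvewise absolute continuity is left unproved and would be substantially harder.
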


\begin{proof}
	Note that $\mu(A)=0$ by Lemma \ref{lem:zero measure}.
	First assume that $X\setminus A$ is $1$-finely open.
	
	Let $u\in N^{1,1}(X\setminus A)$.
	First suppose that $-M\le u\le M$ for some $M> 0$.
	Since $X\setminus A$ is $1$-quasiopen by \eqref{eq:finely and quasiopen},
	we find an open set $G\subset X$ such that $(X\setminus A)\cup G$
	is open and thus $A\setminus G$ is closed, and $\capa_1(G)$
	can be chosen arbitrarily small.
	Let $\eps>0$.
	Using Lemma \ref{lem:capacity and Newtonian function}, in fact we also find an open set
	$U\supset G$ such that $\capa_1(U)<\eps$,
	and a function $\eta\in N_0^{1,1}(U)$ such that $0\le \eta\le 1$ on $X$, $\eta=1$ in $G$, and
	$\Vert \eta\Vert_{N^{1,1}(X)}<\eps$.
	Define
	\[
	v:=(1-\eta)u.
	\]
	We first observe that
	\begin{equation}\label{eq:initial Leibniz}
	v\in N^{1,1}(X\setminus A)\quad\textrm{with}\quad
	g_{v,X\setminus A}\le g_{u,X\setminus A}+Mg_{\eta,X}
	\end{equation}
	by the Leibniz rule \eqref{eq:Leibniz rule};
	we understand these minimal $1$-weak upper gradients to be pointwise defined.
	Consider a curve $\gamma\colon [0,\ell_{\gamma}]\to (X\setminus A)\cup G$;
	recall that the latter set is open.
	Excluding a
	$\Mod_1$-negligible family, we can assume that the pair $(v,g_{v,X\setminus A})$
	satisfies the upper gradient inequality
	on all subcurves of $\gamma$ that are in $X\setminus A$; see \cite[Lemma 1.34(c)]{BB}.
	By \eqref{eq:path open} and \eqref{eq:finely and quasiopen},
	excluding another $\Mod_1$-negligible family,
	we can assume that $\gamma^{-1}(X\setminus A)$ 
	and $\gamma^{-1}(G)$ are relatively open sets.
	
	Now $[0,\ell_{\gamma}]$ is a compact set that is covered by the two relatively
	open sets $\gamma^{-1}(X\setminus A)$ and $\gamma^{-1}(G)$.
	By the Lebesgue number lemma,
	there exists a number $\delta>0$ such that every subinterval of $[0,\ell_{\gamma}]$ with
	length at most $\delta$ is contained either in $\gamma^{-1}(X\setminus A)$ or in
	$\gamma^{-1}(G)$.
	Choose $m\in\N$ such that $\ell_{\gamma}/m\le \delta$ and consider the
	subintervals $I_j:=[j\ell_{\gamma}/m,(j+1)\ell_{\gamma}/m]$, $j=0,\ldots,m-1$.
	If $I_j\subset \gamma^{-1}(X\setminus A)$, then by our assumptions on $\gamma$,
	\[
	|v(j\ell_{\gamma}/m)-v((j+1)\ell_{\gamma}/m)|\le \int_{j\ell_{\gamma}/m}^{(j+1)\ell_{\gamma}/m}g_{v,X\setminus A}(\gamma(s))\,ds.
	\]
	Otherwise $I_j\subset \gamma^{-1}(G)$. Recall that
	$v=0$ in $G$. Then
	\begin{align*}
		|v(j\ell_{\gamma}/m)-v((j+1)\ell_{\gamma}/m)|=0.
	\end{align*}
	Adding up the inequalities for $j=0,\ldots,m-1$,
	by \eqref{eq:initial Leibniz}
	we conclude that the
	upper gradient inequality holds for the
	pair $(v,g_{u,X\setminus A}+Mg_{\eta,X})$ on the curve $\gamma$, that is,
	\[
	|v(\gamma(0))-v(\gamma(\ell_{\gamma}))|\le \int_{\gamma}(g_{u,X\setminus A}+Mg_{\eta,X})\,ds.
	\]
	We conclude that $v\in N^{1,1}((X\setminus A)\cup G)$ with
	$g_{v,(X\setminus A)\cup G}\le g_{u,X\setminus A}+Mg_{\eta,X}$,
	and so (we drop the sets from the subscripts)
	\begin{equation}\label{eq:gv and gu}
	\int_{(X\setminus A)\cup G}g_v\,d\mu
	\le \int_{X\setminus A}g_u\,d\mu
	+M\int_X g_{\eta}\,d\mu
	\le \int_{X\setminus A}g_u\,d\mu+M\eps.
	\end{equation}
	Using the coarea formula \eqref{eq:coarea}, we estimate
	\begin{equation}\label{eq:gv Dv estimate}
	\begin{split}
		\int_{(X\setminus A)\cup G}g_{v}\,d\mu
		&\ge \Vert Dv\Vert((X\setminus A)\cup G)\\
		&= \int_{-\infty}^{\infty}P(\{v>t\}, (X\setminus A)\cup G)\,dt\\
		&\ge \alpha \int_{(-\infty,\infty)}^*\mathcal H(\partial^*\{v>t\}\cap [(X\setminus A)\cup G])\,dt\quad\textrm{by }\eqref{eq:def of theta}\\
		&\ge \alpha\int_{(-\infty,\infty)}^*\mathcal H(\Sigma_{\beta}\{v>t\})\,dt\quad\textrm{by Proposition }\ref{prop:finite perimeter removability}\\
		&\ge \alpha C_d^{-1}\int_{(-\infty,\infty)}^*P(\{v>t\},X)\,dt\quad\textrm{by }
		\eqref{eq:perimeter and strong boundary}\\
		&= \alpha C_d^{-1}\Vert Dv\Vert(X).
	\end{split}
	\end{equation}
	We obtain $v\in \BV(X)$. Since $v\in N^{1,1}((X\setminus A)\cup G)$,
	obviously $\Vert Dv\Vert$ is absolutely continuous with respect to
	$\mu$ in the open set $(X\setminus A)\cup G$.
	But we also have by the coarea formula \eqref{eq:coarea}
	and \eqref{eq:def of theta} that
	\[
	\Vert Dv\Vert(A\setminus G)
	\le C_d\int_{-\infty}^{\infty}\mathcal H(\Sigma_{\beta}\{v>t\}\cap (A\setminus G))\,dt=0
	\]
	by Proposition \ref{prop:finite perimeter removability},
	and so $\Vert Dv\Vert$ is absolutely continuous with respect to
	$\mu$ in the entire $X$.
	By \eqref{eq:from BV to Sobolev}, we get $v^*\in N^{1,1}(X)$ with
	\begin{align*}
		\Vert Dv\Vert(X)
		\ge \frac{1}{C}\int_{X} g_{v^*}\,d\mu
		\ge \frac{1}{C}\int_{X\setminus \overline{U}^1} g_{v^*,X\setminus \overline{U}^1}\,d\mu
		= \frac{1}{C}\int_{X\setminus \overline{U}^1} g_{u^*,X\setminus \overline{U}^1}\,d\mu,
	\end{align*}
	where the last equality follows from the fact that $v^*=u^*$ everywhere in
	$X\setminus \overline{U}^1$
	by \eqref{eq:fine and measure topology}.
	Combining this with \eqref{eq:gv Dv estimate} and \eqref{eq:gv and gu}, we get
	\[
	\int_{X\setminus A}g_u\,d\mu+M\eps
	\ge \frac{\alpha C_d^{-1}}{C}\int_{X\setminus \overline{U}^1} 
	g_{u^*,X\setminus \overline{U}^1}\,d\mu.
	\]
	Note that by \eqref{eq:Lebesgue point result}, $v$ has Lebesgue points $1$-q.e.
	in the open set $(X\setminus A)\cup G$. Thus clearly
	\[
	u^*=v^*=v=u\quad 1\textrm{-q.e. in }(X\setminus A)\setminus \overline{U}^1.
	\]
	Note that $\capa_1(\overline{U}^1)<\eps$ by \eqref{eq:capa of fine closure}.
	Taking the limit as $\eps\to 0$, by Lemma \ref{lem:capacities} we get
	$u^*\in N^{1,1}(X)$ with
	\begin{equation}\label{eq:bounded conclusion}
	\int_{X\setminus A}g_u\,d\mu
	\ge \frac{\alpha C_d^{-1}}{C}\int_{X} g_{u^*}\,d\mu,
	\end{equation}
	and
	\begin{equation}\label{eq:Lebesgue points for bounded}
	u^*=u\quad 1\textrm{-q.e. in }X\setminus A.
	\end{equation}
	
	Next consider a general (possibly unbounded) $u\in N^{1,1}(X\setminus A)$.
	We have for the truncations $u_M:=\max\{-M,\min\{M,u\}\}$ that
	\[
	\int_{X\setminus A}g_{u}\,d\mu
	\ge \int_{X\setminus A}g_{u_M}\,d\mu
	\ge \frac{\alpha C_d^{-1}}{C}\int_X g_{(u_M)^*}\,d\mu
	\quad\textrm{by }\eqref{eq:bounded conclusion}.
	\]
	Note that each $(u_M)^*$ has Lebesgue points $1$-q.e. in $X$ by \eqref{eq:Lebesgue point result}.
	Consider a point $x\in X$ that is a Lebesgue point for all $(u_M)^*$.
	Then $(u_M)^*(x)=((u_{M+1})^*)_M(x)$ for all $M\in \N$.
	Moreover, $g_{(u_M)^*}$ is an a.e. increasing sequence by \eqref{eq:truncation} and
	\eqref{eq:quasieverywhere equivalence class}.
	Letting $M\to \infty$, by \cite[Proposition 2.4]{BB} we get for $v:=\limsup_{M\to\infty} (u_M)^*$
	(note that the limit exists $1$-q.e.)
	that
	$v\in N^{1,1}(X)$ and
	\[
	\int_{X\setminus A}g_{u}\,d\mu\ge \frac{\alpha C_d^{-1}}{C}\int_{X} g_{v}\,d\mu.
	\]
	Moreover, by \eqref{eq:Lebesgue points for bounded} we get
	$(u_M)^*=u_M$ $1$-q.e. in $X\setminus A$, and thus
	\[
	v=u\quad 1\textrm{-q.e. in }X\setminus A,
	\]
	and in particular $v=u$ a.e. in $X$.
	By \eqref{eq:ug in finely open} and \eqref{eq:ae to qe},
	\[
	g_{v}=g_{v,X\setminus A}=g_{u,X\setminus A}\quad \textrm{a.e. in }X.
	\]
	Thus $A$ is isometrically removable for $N^{1,1}(X)$.
	
	Finally we remove the assumption that $X\setminus A$ is $1$-finely open.
	Let $A\subset X$ be as in the statement of the theorem, and let
	$u\in N^{1,1}(X\setminus A)$.
	The set $X\setminus \overline{A}^1$ is $1$-finely open, and by \eqref{eq:capa of fine closure}, we have
	\[
	\liminf_{r\to 0}\frac{\rcapa_1(\overline{A}^1\cap B(x,r),B(x,2r))}{\rcapa_1(B(x,r),B(x,2r))}<c_*
	\]
	for $1$-q.e. $x\in X$.
	We of course have $u\in N^{1,1}(X\setminus \overline{A}^1)$, and note that
	$\mu(\overline{A}^1)=0$ by Lemma \ref{lem:zero measure}.
	By the first part of the proof,
	we find $v\in N^{1,1}(X)$ with
	$v=u$ a.e. in $X$ and
	$g_{v}=g_{u,X\setminus \overline{A}^1}$ a.e. in $X$.
	Formula \eqref{eq:ae to qe} and the fact that $A\subset \overline{A}^1$ imply that necessarily
	$g_{v}\ge g_{v,X\setminus A}= g_{u,X\setminus A}\ge g_{u,X\setminus \overline{A}^1}$ a.e. in $X$, and so
	we also get
	$g_{v}=g_{u,X\setminus A}$ a.e. in $X$.
	Thus $A$ is isometrically removable for $N^{1,1}(X)$.
\end{proof}

As mentioned in the introduction, there is a well known equivalence between a set being removable
for Newton--Sobolev functions, and being removable for the Poincar\'e inequality.
We will use a version of this equivalence given in the following two theorems
from the recent paper \cite{BBL}.
The first of these theorems is given by \cite[Theorem 5.4]{BBL} implication $(c)\Rightarrow (f)$.

\begin{theorem}\label{thm:equivalence}
	Let $A\subset X$ be such that $\mu(A)=0$.
	If the set $A$ is isometrically removable for $N^{1,1}(X)$,
	then the space $X\setminus A$ supports a $(1,1)$--Poincar\'e inequality.
\end{theorem}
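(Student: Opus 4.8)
The plan is to obtain the $(1,1)$-Poincar\'e inequality on $X\setminus A$ directly from the one on $X$, using the isometric removability of $A$ to transfer Newton--Sobolev functions, and with them their minimal weak upper gradients, from $X\setminus A$ to $X$. Fix $x\in X\setminus A$, a radius $r>0$, a function $u\in L^1(X)$, and an upper gradient $g$ of $u$ in $X\setminus A$; the goal is to verify \eqref{eq:poincare inequality minus A} (with $p=1$) for the constants $\lambda':=2\lambda$ and $C_P':=C_dC_P$. If $\int_{B(x,2\lambda r)}g\,d\mu=\infty$ the claimed inequality is trivial, so we may assume $g\in L^1(B(x,2\lambda r))$. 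Since truncation by $M$ is $1$-Lipschitz, for each $M>0$ the function $u_M:=\max\{-M,\min\{M,u\}\}$ still has $g$ as an upper gradient in $X\setminus A$, and hence $g_{u_M,X\setminus A}\le g$ a.e.

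Fix a Lipschitz cutoff $\eta$ with $0\le\eta\le 1$, $\eta\equiv 1$ on $B(x,\lambda r)$, $\supp\eta\subset B(x,2\lambda r)$, and pointwise Lipschitz constant at most $2/(\lambda r)$, and set $w_M:=\eta u_M$, extended by $0$ outside $\supp\eta$. Both $\eta$ and $u_M$ are bounded and lie in $N^{1,1}_{\loc}(B(x,2\lambda r)\setminus A)$, so the Leibniz rule \eqref{eq:Leibniz rule}, together with the standard fact that multiplying by a compactly supported cutoff keeps one in the Newton--Sobolev class, gives $w_M\in N^{1,1}(X\setminus A)$ (using $\mu(A)=0$ for the $L^1$ part and $g\in L^1(B(x,2\lambda r))$ for the gradient part) with
\[
g_{w_M,X\setminus A}\ \le\ g\,\ch_{B(x,2\lambda r)}+\frac{2M}{\lambda r}\,\ch_{B(x,2\lambda r)\setminus B(x,\lambda r)}\qquad\text{a.e. in }X.
\]
In particular $w_M=u_M$ on the open ball $B(x,\lambda r)$, where moreover the annular term vanishes, so $g_{w_M,X\setminus A}\le g$ a.e. on $B(x,\lambda r)$. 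Since $A$ is isometrically removable for $N^{1,1}(X)$, there is $v_M\in N^{1,1}(X)$ with $v_M=w_M$ a.e. in $X$ and $g_{v_M}=g_{w_M,X\setminus A}$ a.e. in $X$; consequently $v_M=u_M$ a.e. on $B(x,\lambda r)\supset B(x,r)$ and $g_{v_M}\le g$ a.e. on $B(x,\lambda r)$.

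Now $v_M\in N^{1,1}(X)\subset\BV(X)$, and taking the constant sequence $v_M$ in the definition of the total variation gives $\Vert Dv_M\Vert(B(x,\lambda r))\le\int_{B(x,\lambda r)}g_{v_M}\,d\mu$. Hence the $\BV$ form of the $(1,1)$-Poincar\'e inequality on $X$, recorded just after \eqref{eq:poincare inequality}, applied to $v_M$ yields
\[
\vint{B(x,r)}|v_M-(v_M)_{B(x,r)}|\,d\mu\ \le\ C_P\,r\vint{B(x,\lambda r)}g_{v_M}\,d\mu .
\]
Using $v_M=u_M$ a.e. on $B(x,\lambda r)$, then $g_{v_M}\le g$ a.e. on $B(x,\lambda r)$, and then $\vint{B(x,\lambda r)}g\,d\mu\le C_d\vint{B(x,2\lambda r)}g\,d\mu$ by doubling, this becomes
\[
\vint{B(x,r)}|u_M-(u_M)_{B(x,r)}|\,d\mu\ \le\ C_dC_P\,r\vint{B(x,2\lambda r)}g\,d\mu .
\]
Letting $M\to\infty$ and invoking dominated convergence ($|u_M|\le|u|\in L^1(B(x,r))$ and $u_M\to u$ a.e.), the left-hand side converges to $\vint{B(x,r)}|u-u_{B(x,r)}|\,d\mu$, and we obtain \eqref{eq:poincare inequality minus A} with $\lambda'=2\lambda$ and $C_P'=C_dC_P$, constants depending only on $C_d,C_P,\lambda$. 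Thus $X\setminus A$ supports a $(1,1)$-Poincar\'e inequality.

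The step needing the most care is the verification that $w_M=\eta u_M\in N^{1,1}(X\setminus A)$ with the stated bound on its minimal weak upper gradient, since $X\setminus A$ is merely measurable rather than open; this rests on the Leibniz rule for minimal weak upper gradients on $X\setminus A$ and on a zero-extension/gluing lemma across $\partial(\supp\eta)$, both of which go through by the usual arguments but should be spelled out. The remainder is a routine combination of the Poincar\'e inequality on $X$, the doubling property, and the definition of isometric removability.
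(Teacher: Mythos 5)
Your proposal is correct, but it follows a different route from the paper in the trivial sense that the paper does not prove Theorem \ref{thm:equivalence} at all: it simply imports it from \cite[Theorem 5.4]{BBL}, implication $(c)\Rightarrow(f)$. Your argument is the natural self-contained one: localize by truncation and a Lipschitz cutoff so that $w_M=\eta u_M$ lands in the global class $N^{1,1}(X\setminus A)$ (this is where $u\in L^1(X)$ and the reduction to $g\in L^1(B(x,2\lambda r))$ are used), invoke isometric removability to obtain $v_M\in N^{1,1}(X)$ with $g_{v_M}=g_{w_M,X\setminus A}$ a.e., and then run the Poincar\'e inequality on $X$ for $v_M$. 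The two points you flag are the only delicate ones, and both go through with tools already in the paper: the bound on $g_{w_M,X\setminus A}$ follows from the paper's Leibniz rule \eqref{eq:Leibniz rule} on the measurable set $B(x,2\lambda r)\setminus A$ together with the locality \eqref{eq:ug in finely open} applied to the open sets $B(x,2\lambda r)$, $B(x,\lambda r)$ and $X\setminus\supp\eta$; and the zero-extension across $\partial(\supp\eta)$ on the non-open set $X\setminus A$ can be carried out exactly as in the proof of Theorem \ref{thm:removability in text}: exclude the $\Mod_1$-negligible families from \cite[Lemma 1.34(c)]{BB}, split a given curve along the relatively open preimage of $\{\eta>0\}$, note that each resulting subcurve stays in $\supp\eta\setminus A\subset B(x,2\lambda r)\setminus A$ while $w_M=0$ at the splitting points, and sum. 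One small bookkeeping slip: the BV form of the Poincar\'e inequality recorded after \eqref{eq:poincare inequality} is stated with non-averaged integrals, so your averaged display picks up an extra factor $\mu(B(x,\lambda r))/\mu(B(x,r))\le C_d^{\lceil\log_2\lambda\rceil}$ (alternatively, the averaged form holds verbatim for minimal $1$-weak upper gradients by approximating them in $L^1$ by genuine upper gradients); either way the resulting constants $C_P',\lambda'$ depend only on $C_d,C_P,\lambda$, which is all the theorem claims. Compared with citing \cite{BBL}, your proof buys self-containedness at the cost of writing out the gluing step, and it makes transparent that only the $(1,1)$-Poincar\'e inequality on $X$ and the isometric (not merely set-theoretic) removability are needed.
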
 

Recall the definition from \eqref{eq:poincare inequality minus A}.
The second theorem is given by \cite[Theorem 5.4]{BBL} implication $(f)\Rightarrow (a)$.

\begin{theorem}\label{thm:Bjorn removability}
	Let $A\subset X$ be such that $\mu(A)=0$, and let $1\le p<\infty$.
	Suppose that the space $X\setminus A$ supports a $(1,p)$--Poincar\'e inequality.
	Then the set $A$ is removable for $N^{1,p}(X)$.
\end{theorem}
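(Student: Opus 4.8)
The plan is to prove the (slightly stronger) statement that every $u\in N^{1,p}(X\setminus A)$ has a pointwise defined representative $v$ on all of $X$ with $v=u$ a.e.\ in $X$ and with $g:=g_{u,X\setminus A}$ (extended by $0$ on $A$) as a $p$-weak upper gradient of $v$ in $X$; this is the content of \cite[Theorem 5.4]{BBL}, whose only structural inputs are $\mu(A)=0$ and the $(1,p)$-Poincar\'e inequality on $X\setminus A$. First I would note that since $\mu(A)=0$ we have $g\in L^p(X)$ and $u\in L^p(X)$, so it only remains to choose the right representative $v$ on $X$ and to verify the upper gradient inequality along $p$-almost every curve in $X$, in particular along the curves that meet $A$.

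To build $v$, I would run the standard telescoping argument on the averages $\vint{B(x,r)}u\,d\mu$. For a centre $x\in X\setminus A$ the $(1,p)$-Poincar\'e inequality on $X\setminus A$ applies directly at $x$; for $x\in A$ one applies it at a nearby centre in $X\setminus A$ (which exists since $\mu(A)=0$) and compares the two comparable balls by the doubling property — the averages never see the null set $A$, so this costs nothing. Summing the Poincar\'e terms over dyadic radii shows that at every $x$ at which a discrete Riesz-type potential of $g^p$ is finite, the limit $v(x):=\lim_{r\to 0}\vint{B(x,r)}u\,d\mu$ exists and satisfies the oscillation decay $\vint{B(x,r)}|u-v(x)|\,d\mu\to 0$ as $r\to 0$. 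The set where that potential is infinite has $\capa_p$ zero by the standard capacitary estimate for potentials of $L^p$ functions (here using the $(1,1)$-, hence $(1,p)$-, Poincar\'e inequality of $X$ itself). Setting $v:=0$ on that exceptional set, $v$ is Borel, $v=u$ a.e.\ in $X$ by Lebesgue differentiation on $X\setminus A$, so by \eqref{eq:quasieverywhere equivalence class} $v\in N^{1,p}(X\setminus A)$ with $g$ as its minimal $p$-weak upper gradient there, and $v$ has the above oscillation decay at $p$-q.e.\ point of $X$, in particular at $p$-q.e.\ point of $A$; equivalently, $v$ is quasicontinuous on all of $X$, not merely on $X\setminus A$.

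It then remains to check that $g$ is a $p$-weak upper gradient of $v$ in $X$. I would discard the $\Mod_p$-negligible families of curves on which: $\int_\gamma g\,ds=\infty$; $\gamma$ meets the $\capa_p$-null exceptional set from the previous step (by \eqref{eq:cap and mod}); $\mathcal L^1(\gamma^{-1}(A))>0$ (negligible by \eqref{eq:mu null set and curves}, since $\mu(A)=0$); or the pair $(v,g)$ fails the upper gradient inequality on some subcurve of $\gamma$ contained in $X\setminus A$ (see \cite[Lemma 1.34(c)]{BB}). For a remaining curve $\gamma\colon[0,\ell_\gamma]\to X$ put $K:=\gamma^{-1}(A)$, a closed set of length zero. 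On each complementary open interval, $v\circ\gamma$ is absolutely continuous with $|(v\circ\gamma)'|\le g\circ\gamma$ a.e., and the increment picked up over all these intervals is at most $\int_\gamma g\,ds$ because $\mathcal L^1(K)=0$. The one remaining point is to show no extra jump is created at $K$, i.e.\ that the one-sided limits of $v\circ\gamma$ at a point $t_0\in K$ agree with $v(\gamma(t_0))$: this is exactly where the oscillation decay of $v$ at $\gamma(t_0)\in A$ pins the value down, hence where the $(1,p)$-Poincar\'e inequality on $X\setminus A$ is used. Granting this, $|v(\gamma(0))-v(\gamma(\ell_\gamma))|\le\int_\gamma g\,ds$, so $g$ is a $p$-weak upper gradient of $v$ in $X$, $v\in N^{1,p}(X)$ with $v=u$ a.e., and $A$ is removable for $N^{1,p}(X)$.

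I expect the gap-closing at $K$ to be the genuinely delicate step: it converts the $\mu$-measure statement $\vint{B(x,r)}|u-v(x)|\,d\mu\to 0$ at points of $A$ into a statement about the values of $v$ along the one-dimensional curve $\gamma$, which needs a careful comparison of balls centred at $\gamma(t_0)$ and at nearby $\gamma(t)$, together with the fact that $\gamma$ avoids the capacitary exceptional set. The necessity of the Poincar\'e hypothesis is visible precisely here — without it a locally constant function jumping across $A$ would violate the conclusion. A more packaged alternative is to invoke that a quasicontinuous $L^p$ function which is absolutely continuous along $p$-a.e.\ curve of the full-measure set $X\setminus A$ automatically belongs to $N^{1,p}(X)$; in that route, establishing quasicontinuity of $v$ on all of $X$ (not just on $X\setminus A$) is the step that consumes the $(1,p)$-Poincar\'e inequality on $X\setminus A$.
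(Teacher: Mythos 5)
The paper does not actually prove this statement: it is imported wholesale from \cite[Theorem 5.4, implication (f)$\Rightarrow$(a)]{BBL}, so the only "paper proof" to compare with is that citation, and if you are content to quote \cite{BBL} as a black box you are doing exactly what the paper does. Taken instead as a self-contained argument, your sketch has a genuine gap, and it sits precisely where you flag it. First, $A$ is an arbitrary set with $\mu(A)=0$, not a closed one, so $K=\gamma^{-1}(A)$ need not be closed; a priori it can be a dense Lebesgue-null subset of $[0,\ell_\gamma]$, in which case there are no complementary open intervals on which to run the absolute-continuity argument at all. Showing that for $p$-a.e.\ curve the set $\gamma^{-1}(X\setminus A)$ has a usable structure (open up to length zero) is essentially the ``$p$-path almost open'' property that constitutes the heart of the cited theorem in \cite{BBL}; it cannot be assumed. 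Second, even for closed $A$, the ``no extra jump at $K$'' step is asserted rather than proved, and the tools you list do not yield it: the decay $\vint{B(\gamma(t_0),r)}|u-v(\gamma(t_0))|\,d\mu\to 0$ controls $\mu$-averages, whereas the one-sided limit of $v\circ\gamma$ at $t_0$ is governed by the behaviour of $g$ at scales \emph{below} $|t-t_0|$ around the nearby curve points $\gamma(t)$. Chaining from $\gamma(t)$ to $\gamma(t_0)$ produces a term of size $|t-t_0|\bigl(\mathcal M_{c|t-t_0|}g^p(\gamma(t))\bigr)^{1/p}$ (a restricted maximal function at the moving point), which is not controlled by the potential at $\gamma(t_0)$, is not $p$-integrable along curves, and is not killed by any of the curve families you discard; so the claim $\lim_{t\to t_0}v(\gamma(t))=v(\gamma(t_0))$ does not follow from the stated ingredients. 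Since a locally constant function jumping across $A$ is excluded only by the Poincar\'e hypothesis, this is exactly the step carrying the content of the theorem, and leaving it as ``expected to be delicate'' leaves the proof incomplete.

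There is also a smaller misstep: from $v=u$ a.e.\ you invoke \eqref{eq:quasieverywhere equivalence class} to conclude $v\in N^{1,p}(X\setminus A)$ with minimal $p$-weak upper gradient $g$, but that formula requires $v=u$ $p$-q.e., i.e.\ a quasicontinuity/Lebesgue-point statement for $u\in N^{1,p}(X\setminus A)$ on the non-open set $X\setminus A$, which is not among your cited tools (this particular issue can be sidestepped by defining $v:=u$ pointwise on $X\setminus A$ off the capacitary exceptional set, keeping the limit-of-averages definition only on $A$). Note finally that the statement asks only for removability, not for $g$ itself as an upper gradient of the extension, so you are free to lose constants; but even with that relaxation the two issues above remain, and closing them essentially requires the machinery of \cite{BBL}.
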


\begin{proof}[Proof of Theorem \ref{thm:removability theorem}]
By Theorem \ref{thm:removability in text}, the set
$A$ is isometrically removable for $N^{1,1}(X)$.
Then by Theorem \ref{thm:equivalence},
$X\setminus A$ supports a $(1,1)$--Poincar\'e inequality.
Then by H\"older's inequality, $X\setminus A$ also supports a $(1,p)$--Poincar\'e inequality.

Finally by Theorem \ref{thm:Bjorn removability}, the set $A$ is also removable for $N^{1,p}(X)$
for all $1<p<\infty$.
\end{proof}

\section{Discussion}

In this final section,
we make remarks and in particular compare our results with the existing literature.

First we comment on the role of the new Federer-type characterization
of sets of finite perimeter given in Theorem \ref{thm:new Federer}.
We observe that if we were relying on the ordinary Federer's characterization
(explained right before Theorem \ref{thm:new Federer}),
then in \eqref{eq:beta over two} we would need to be considering a point $x\in \partial^*E$
instead of $x\in \Sigma_{\beta}E$, and then there would be no guarantee even that
\[
\limsup_{j\to\infty}\frac{\mu( B(x,r_j/\lambda)\setminus (E\cup A(x,r_j)))}{\mu(B(x,r_j))}>0,
\]
and so the rest of the proof would not work.
In order to make it work,
instead of the capacitary density condition \eqref{eq:liminf intro}
we would need to consider the stronger condition
\begin{equation}\label{eq:stronger density}
\lim_{r\to 0}\frac{\rcapa_1(A\cap B(x,r),B(x,2r))}{\rcapa_1(B(x,r),B(x,2r))}
=0\quad\textrm{for 1-q.e. }x\in X.
\end{equation}
However, this would mean that $\capa_1(b_1 A)=0$ (recall Definition \ref{def:1 fine topology}),
and then it would follow that $\capa_1(A)=0$, by \eqref{eq:A minus b 1 A}.
By \eqref{eq:cap and mod}, $1$-a.e. curve then has empty intersection with $A$, and so
$A$ is clearly isometrically removable for $N^{1,1}(X)$.
Thus this situation is actually very straightforward, owing to the fact that
\eqref{eq:stronger density} is a strong condition that forces $A$ to be very small.
But replacing ``$\lim$'' by  ``$\liminf$''
and $0$ by a small constant $c_*$,
as we do in \eqref{eq:liminf intro}, makes the situation much more intricate.

Now we compare our results with the literature.
According to Theorem A of Koskela--Shanmu\-galingam--Tuo\-mi\-nen \cite{KST},
in a metric space $(X,d,\mu)$ with $\mu$ doubling and supporting a $(1,p)$--Poincar\'e
inequality with $1<p<\infty$,
and with $X$ satisfying a suitable connectivity condition,
if $A\subset X$ is a compact set
satisfying the $t$-porosity condition
\begin{equation}\label{eq:porosity}
\textrm{for every }x\in A,\ A\cap (B(x,tr)\setminus B(x,r))=\emptyset
\ \textrm{ for arbitrarily small }r>0
\end{equation}
with a sufficiently large $t>1$,
then $X\setminus A$ also supports a $(1,p)$--Poincar\'e inequality.
In Theorem B of \cite{KST} (and in the discussion after the theorem)
it is then noted that the aforementioned connectivity condition holds in particular
if $X$ is complete and Ahlfors $Q$-regular (recall \eqref{eq:Ahlfors Q regular}), with $Q>1$, and $1<p\le Q$.
Korte \cite[Theorem 3.3]{Kor} shows that the connectivity condition
can in fact be obtained only assuming the upper mass bound
\begin{equation}\label{eq:upper mass bound}
\frac{\mu(B(x,r))}{\mu(B(x,R))}\le C_0\left(\frac{r}{R}\right)^Q
\end{equation}
for all $x\in X$ and $0<r\le R<\diam X$,
and constants $Q>1$, $C_0>0$.
(Recall that we considered this type of condition already in \eqref{eq:upper bound with sigma}.)
Now we get the following extension of the results of \cite{KST} to the case $p=1$.
Recall that for us, completeness, doubling, and $(1,1)$--Poincar\'e are standing assumptions on the space.

\begin{corollary}\label{cor:removability}
	Assume that the upper mass bound \eqref{eq:upper mass bound} holds
	for all $x\in X$ and $0<r\le R<\diam X$, and with
	$Q>1$. Suppose that a closed set $A\subset X$ is $t$-porous
	with sufficiently large $t>1$,
	depending only on $C_d,C_P,\lambda,C_0$, and $Q$.
	Then $X\setminus A$ supports a $(1,1)$--Poincar\'e inequality,
	and $A$ is removable for $N^{1,p}(X)$, with $1\le p<\infty$.
\end{corollary}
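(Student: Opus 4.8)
The plan is to deduce Corollary \ref{cor:removability} directly from Theorem \ref{thm:removability theorem}: since completeness, doubling, and the $(1,1)$--Poincar\'e inequality are standing assumptions, all that remains is to check that a closed $t$--porous set $A$ satisfies the capacitary density condition \eqref{eq:liminf intro} at every $x\in X$, once $t$ is taken large enough. For $x\notin A$ this would be immediate: as $A$ is closed, $\dist(x,A)>0$, so $A\cap B(x,r)=\emptyset$ and hence $\rcapa_1(A\cap B(x,r),B(x,2r))=0$ for all small $r$, giving $\liminf=0<c_*$. So the real work is at points $x\in A$, and the idea there is to test at the porosity scales.

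Fix $x\in A$. By $t$--porosity there is a sequence $\rho_j\searrow 0$ with $A\cap(B(x,t\rho_j)\setminus B(x,\rho_j))=\emptyset$, i.e. $A\cap B(x,t\rho_j)\subset B(x,\rho_j)$. Set $r_j:=t\rho_j\searrow 0$, and take $j$ large enough that all radii involved are below $\tfrac18\diam X$. Using monotonicity of the variational $1$--capacity (increasing in the condenser set, decreasing in the surrounding open set) followed by the upper bound in \eqref{eq:capa basic estimate} at radius $\rho_j$, I would estimate
\[
\rcapa_1(A\cap B(x,r_j),B(x,2r_j))\le \rcapa_1(B(x,\rho_j),B(x,2\rho_j))\le C_d\,\frac{\mu(B(x,\rho_j))}{\rho_j},
\]
while the lower bound in \eqref{eq:capa basic estimate} at radius $r_j$ gives $\rcapa_1(B(x,r_j),B(x,2r_j))\ge C_{\mathrm{sob}}^{-1}\mu(B(x,r_j))/r_j$. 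Dividing and then applying the upper mass bound \eqref{eq:upper mass bound} with $r=\rho_j\le R=r_j$, so that $\mu(B(x,\rho_j))\le C_0 t^{-Q}\mu(B(x,r_j))$, yields
\[
\frac{\rcapa_1(A\cap B(x,r_j),B(x,2r_j))}{\rcapa_1(B(x,r_j),B(x,2r_j))}\le C_dC_{\mathrm{sob}}\,\frac{r_j}{\rho_j}\,\frac{\mu(B(x,\rho_j))}{\mu(B(x,r_j))}\le C_dC_{\mathrm{sob}}C_0\,t^{1-Q}.
\]
Since $Q>1$, choosing $t$ so large that $C_dC_{\mathrm{sob}}C_0 t^{1-Q}<c_*$ forces the $\liminf$ in \eqref{eq:liminf intro} (along $r_j$) to be strictly below $c_*$; because $C_{\mathrm{sob}}$ and $c_*$ depend only on $C_d,C_P,\lambda$, this threshold for $t$ depends only on $C_d,C_P,\lambda,C_0,Q$, as the statement requires. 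Then \eqref{eq:liminf intro} holds at every $x\in X$, hence certainly $1$-q.e., and Theorem \ref{thm:removability theorem} gives both that $X\setminus A$ supports a $(1,1)$--Poincar\'e inequality and that $A$ is removable for $N^{1,p}(X)$ for all $1\le p<\infty$.

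I do not expect a serious obstacle. The one point that must be handled correctly is the balance of exponents: the capacity comparison loses a factor equal to the scale ratio $r_j/\rho_j=t$, which has to be more than compensated by the measure decay $\mu(B(x,\rho_j))/\mu(B(x,r_j))$. Plain doubling gives only the trivial bound $\le 1$ on this ratio, which would make the comparison useless for large $t$; the upper mass bound \eqref{eq:upper mass bound} with $Q>1$ is precisely what supplies the decay $C_0 t^{-Q}$ needed to overcome the scale factor. This is exactly the role played by the Ahlfors-type regularity assumption in \cite{KST} (and, in the form of Korte's connectivity criterion \cite{Kor}, in that paper's setting); here it enters only through this single estimate, and no separate connectivity hypothesis is needed since the $(1,1)$--Poincar\'e inequality is already among our standing assumptions.
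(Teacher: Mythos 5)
Your proposal is correct and follows essentially the same route as the paper: testing the capacitary ratio at the outer porosity scale, bounding the numerator by the capacity of the inner ball via monotonicity and \eqref{eq:capa basic estimate}, invoking the upper mass bound \eqref{eq:upper mass bound} to get the bound $C_{\mathrm{sob}}C_dC_0t^{1-Q}$, handling $x\notin A$ by closedness, and then applying Theorem \ref{thm:removability theorem}. The only differences are notational (you write the porosity radii as $\rho_j$ with $r_j=t\rho_j$, while the paper tests at radius $tr_j$), so there is nothing to add.
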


\begin{proof}
Let $x\in A$. Using the $t$-porosity, we find a sequence $r_j\searrow 0$ such that
$A\cap (B(x,tr_j)\setminus B(x,r_j))=\emptyset$ for every $j\in\N$.
Thus
\begin{align*}
	\liminf_{r\to 0}\frac{\rcapa_1(A\cap B(x,r),B(x,2r))}{\rcapa_1(B(x,r),B(x,2r))}
	&\le \liminf_{j\to\infty}\frac{\rcapa_1(B(x,r_j),B(x,2tr_j))}{\rcapa_1(B(x,tr_j),B(x,2tr_j))}\\
	&\le C_{\textrm{sob}}C_d \liminf_{j\to\infty}\frac{\mu(B(x,r_j))/r_j}{\mu(B(x,tr_j))/(tr_j)}
	\quad\textrm{by }\eqref{eq:capa basic estimate}\\
	&\le C_{\textrm{sob}}C_d C_0 t\liminf_{j\to\infty}\left(\frac{r_j}{tr_j}\right)^{Q}
	\quad\textrm{by }\eqref{eq:upper mass bound}\\
	&= C_{\textrm{sob}} C_d C_0  t^{1-Q}.
\end{align*}
In fact this holds for every $x\in X$, since $A$ is closed and so 
for $x\in X\setminus A$ the limit is trivially zero. 
Now, $C_{\textrm{sob}} C_d C_0 t^{1-Q}< c_*$ for large enough $t>1$, and so
the capacitary density condition of Theorem \ref{thm:removability theorem} holds,
and this implies both claims.
\end{proof}

The proofs of Theorems A and B in \cite{KST} are based on a construction where
the $(1,p)$--Poincar\'e inequality is applied to chains of balls situated in the annuli
that do not intersect $A$.
The advantage of our techniques is of course that we only require the capacitary density condition,
instead of the strong assumption of very large annuli having empty intersection with $A$.
Thus Corollary \ref{cor:removability} gives an improved removability result also in the
case $1<p<\infty$,
though only in spaces that support a $(1,1)$--Poincar\'e inequality.

The porosity condition is a very geometric condition.
In closing, we examine the possibility of expressing
the capacitary density condition also in a
geometric form, using the Hausdorff content \eqref{eq:Hausdorff content}.
The comparability between the 1-capacity and the Hausdorff
content is generally a well known fact, and so we only sketch the proof.

Consider the upper mass bound
\begin{equation}\label{eq:lower mass bound}
	\frac{\mu(B(x,r))}{\mu(B(x,R))}\le C_0\frac{r}{R}
	\quad\textrm{for all }x\in X\textrm{ and }0<r\le 4R<\diam X,
\end{equation}
and for some constant $C_0>0$.
This is a fairly mild requirement, since roughly speaking it says that the space has dimension
at least $1$ at every point $x$.
\begin{proposition}
Suppose the upper mass bound \eqref{eq:lower mass bound} holds, and let $A\subset X$.
Then for every $x\in X$, we have
\begin{align*}
\frac{1}{C}\liminf_{r\to 0}\frac{\rcapa_1(A\cap B(x,r),B(x,2r))}{\rcapa_1(B(x,r),B(x,2r))}
&\le \liminf_{r\to 0}\frac{\mathcal H_\infty(A\cap B(x,r))}{\mu(B(x,r))/r}\\
&\le C\liminf_{r\to 0}\frac{\rcapa_1(A\cap B(x,r),B(x,2r))}{\rcapa_1(B(x,r),B(x,2r))}
\end{align*}
for a constant $C\ge 1$ depending only on $C_d,C_P,\lambda,C_0$.

\end{proposition}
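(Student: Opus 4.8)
The plan is to deduce the proposition from the elementary estimate \eqref{eq:capa basic estimate} together with the (standard) comparability, \emph{at a fixed scale}, between the variational $1$-capacity and the codimension-one Hausdorff content. Indeed, \eqref{eq:capa basic estimate} gives $\rcapa_1(B(x,r),B(x,2r))\approx\mu(B(x,r))/r$ with constants depending only on $C_d,C_{\mathrm{sob}}$, so it suffices to produce a constant $C=C(C_d,C_P,\lambda,C_0)\ge1$ such that for every $x\in X$, every sufficiently small $r>0$, and every $S\subset B(x,r)$ one has
\[
\tfrac1C\,\mathcal H_\infty(S)\ \le\ \rcapa_1\bigl(S,B(x,2r)\bigr)\ \le\ C\,\mathcal H_\infty(S).
\]
Applying this with $S=A\cap B(x,r)$ and passing to $\liminf_{r\to0}$ then gives the proposition, since term by term the capacitary ratio is comparable to the Hausdorff-content ratio.

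For the upper bound $\rcapa_1(S,B(x,2r))\le C\,\mathcal H_\infty(S)$ I would argue by a covering construction, and this is where the mild mass bound \eqref{eq:lower mass bound} enters. Fix a covering $S\subset\bigcup_j B(y_j,\rho_j)$ of finite cost, each ball meeting $B(x,r)$. If some $\rho_{j_0}>r/4$, then \eqref{eq:lower mass bound} together with the doubling property gives $\mu(B(x,r))/r\le C\,\mu(B(y_{j_0},\rho_{j_0}))/\rho_{j_0}$, so by \eqref{eq:capa basic estimate}
\[
\rcapa_1(S,B(x,2r))\le\rcapa_1(B(x,r),B(x,2r))\le C_d\frac{\mu(B(x,r))}{r}\le C\sum_j\frac{\mu(B(y_j,\rho_j))}{\rho_j}.
\]
If instead $\rho_j\le r/4$ for all $j$, then $B(y_j,2\rho_j)\subset B(x,2r)$ for every $j$; taking the $\rho_j^{-1}$-Lipschitz cutoffs $\eta_j:=\max\{0,\min\{1,2-d(\cdot,y_j)/\rho_j\}\}$, whose minimal $1$-weak upper gradients are dominated by $\rho_j^{-1}\ch_{B(y_j,2\rho_j)}$, the function $\eta:=\sup_j\eta_j$ belongs to $N^{1,1}_0(B(x,2r))$, equals $1$ on $S$, and satisfies $g_\eta\le\sum_j g_{\eta_j}$ a.e., whence $\rcapa_1(S,B(x,2r))\le\int_X g_\eta\,d\mu\le C_d\sum_j\mu(B(y_j,\rho_j))/\rho_j$. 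In either case, taking the infimum over all coverings yields the bound.

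For the lower bound $\mathcal H_\infty(S)\le C\,\rcapa_1(S,B(x,2r))$ I would run a boxing argument. Given $u\in N^{1,1}_0(B(x,2r))$ with $u\ge1$ on $S$, truncate so that $0\le u\le1$; then $u\equiv1$ on $S$. By the coarea formula \eqref{eq:coarea} one finds $t\in(0,\tfrac12)$ for which $E:=\{u>t\}\subset B(x,2r)$ has finite perimeter with $P(E,X)\le2\int_X g_u\,d\mu$. Combining \eqref{eq:Lebesgue point result} with \eqref{eq:capa and H measure}, $\mathcal H$-a.e.\ point $y$ of $S$ is a Lebesgue point of $u$ and, since $u\equiv1$ there and $t\le\tfrac12$, satisfies $\theta^*(X\setminus E,y)=0$, i.e.\ $y\in I_E$; hence $\mathcal H_\infty(S)=\mathcal H_\infty(S\cap I_E)\le\mathcal H_\infty(I_E)$. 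Now one invokes the boxing inequality $\mathcal H_\infty(I_E)\le C\,P(E,X)$ for bounded sets of finite perimeter: for each $y\in I_E$ a stopping-time argument, using \eqref{eq:upper bound with sigma} to force the density of $E$ in $B(y,\cdot)$ below $\tfrac12$ at some scale comparable to $r$, yields a radius $\rho_y\lesssim r$ at which both $E$ and $X\setminus E$ occupy a definite fraction of $B(y,2\rho_y)$, so the relative isoperimetric inequality \eqref{eq:relative isoperimetric inequality} gives $\mu(B(y,\rho_y))/\rho_y\le C\,P(E,B(y,2\lambda\rho_y))$; a $5r$-covering of $\{B(y,\rho_y)\}_{y\in I_E}$, together with the bounded overlap of the $2\lambda$-dilates of the selected disjoint subfamily (which is standard), then gives $\mathcal H_\infty(I_E)\le C\,P(E,X)$. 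Altogether $\mathcal H_\infty(S)\le C\,P(E,X)\le 2C\int_X g_u\,d\mu$, and the infimum over $u$ completes the lower bound.

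The main obstacle is this last step: establishing the boxing inequality with no measure-correction term, which forces one to cover $I_E$ by balls of radius comparable to $r$ — exactly where the choice of the crossing scale $\rho_y$, the use of \eqref{eq:upper bound with sigma}, and the bounded overlap of dilated disjoint balls have to be combined carefully. The upper bound, by contrast, is a routine covering argument once the scales $\rho_j\le r/4$ and $\rho_j>r/4$ are separated, the sole role of \eqref{eq:lower mass bound} being to prevent $\mathcal H_\infty(S)$ from being made artificially small by a single enormous ball in the covering. Everything else is a standard combination of \eqref{eq:coarea}, \eqref{eq:relative isoperimetric inequality}, \eqref{eq:Lebesgue point result}, \eqref{eq:capa and H measure} and the basic covering lemmas, and the resulting constant $C$ depends only on $C_d,C_P,\lambda,C_0$.
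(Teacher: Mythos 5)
Your overall strategy matches the paper's: the direction ``capacity $\lesssim$ content'' is proved by the same covering dichotomy (one ball of radius $>r/4$ versus all radii $\le r/4$), with \eqref{eq:lower mass bound} used exactly as in the paper to handle a single large ball and \eqref{eq:capa basic estimate} (or your explicit Lipschitz cutoffs, which amount to the same thing) to handle small balls; and the direction ``content $\lesssim$ capacity'' passes, as in the paper, from a capacity test function to a set of finite perimeter via the coarea formula \eqref{eq:coarea} and then invokes a boxing-type estimate. The genuine difference is in that last step: the paper simply reuses the superlevel set $A(x,r)$ constructed earlier (where, by outer regularity of the capacity, $A\cap B(x,r)$ lies in the topological interior of $A(x,r)$) and cites the boxing inequality \cite[Theorem 3.1]{KKST} to cover that interior with controlled content; you instead re-derive a boxing inequality from scratch and, because your set $E=\{u>t\}$ need not contain $S$ in its interior, you need the extra (correct) step combining \eqref{eq:Lebesgue point result} and \eqref{eq:capa and H measure} to place $\mathcal H$-a.e.\ point of $S$ in $I_E$. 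That route is workable and somewhat more self-contained, at the price of redoing a cited result.

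One step in your boxing argument is wrong as stated: after selecting, via the $5r$-covering lemma, a disjoint subfamily of the balls $B(y,\rho_y)$, you appeal to ``the bounded overlap of the $2\lambda$-dilates of the selected disjoint subfamily (which is standard)''. This is not true in general: disjoint balls with geometrically growing radii can have all their $2\lambda$-dilates containing a common point, so no overlap bound depending only on $C_d$ and $\lambda$ exists. The standard (and easy) fix is to run the covering lemma on the enlarged balls themselves, i.e.\ apply the isoperimetric inequality \eqref{eq:relative isoperimetric inequality} at scale $\rho_y$ to get $\mu(B(y,2\lambda\rho_y))/(2\lambda\rho_y)\le C\,P(E,B(y,2\lambda\rho_y))$, then choose a \emph{disjoint} subfamily of the balls $B(y_i,2\lambda\rho_{y_i})$ whose $5$-dilates cover $I_E$; disjointness then gives $\sum_i P(E,B(y_i,2\lambda\rho_{y_i}))\le P(E,X)$ and doubling controls the content of the $5$-dilates. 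With this correction (or by simply citing \cite[Theorem 3.1]{KKST} as the paper does), your proof is complete; the stopping-time choice of $\rho_y\lesssim r$ using \eqref{eq:upper bound with sigma}, and the requirement that $r$ be small, are handled correctly and mirror the paper's ``if $r>0$ is small''.
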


\begin{proof}
By \eqref{eq:capa basic estimate}, the denominators are comparable for small $r$.
To prove the first inequality,
let $\eps>0$ and
consider a covering $\{B_j=B(x_j,r_j)\}_{j}$ of $A\cap B(x,r)$ for which
$\sum_{j}\frac{\mu(B_j)}{r_j}<\mathcal H_\infty(A\cap B(x,r))+\eps$.
We can assume that each ball $B_j$ intersects $A\cap B(x,r)$.
If $r_j\ge r/4$ for some $j$, then  we have 
\[
\frac{\mu(B_j)}{r_j}
\ge \frac{1}{C_d^4}\frac{\mu(B(x,r_j))}{r_j}
\ge \frac{1}{C_0C_d^4}\frac{\mu(B(x,r))}{r}
\quad\textrm{by }\eqref{eq:lower mass bound},
\]
and so
\[
\frac{\rcapa_1(A\cap B(x,r),B(x,2r))}{\rcapa_1(B(x,r),B(x,2r))}
\le 1 \le C_0C_d^4\frac{\mathcal H_\infty(A\cap B(x,r))+\eps}{\mu(B(x,r))/r}.
\]
Thus we can assume $r_j\le r/4$ for all $j$. Then
\begin{align*}
\rcapa_1(A\cap B(x,r),B(x,2r))
&\le \sum_{j}\rcapa_1(B_j,B(x,2r))\\
&\le \sum_{j}\rcapa_1(B_j,2B_j)\quad\textrm{since }2B_j\subset B(x,2r)\\
&\le C_d\sum_{j}\frac{\mu(B_j)}{r_j}\quad\textrm{by }\eqref{eq:capa basic estimate}\\
&\le C_d (\mathcal H_{\infty}(A\cap B(x,r))+\eps).
\end{align*}
Letting $\eps\to 0$, we get the first inequality.

To prove the second inequality,
let $\eps>0$.
Consider a set $A(x,r)\subset B(x,2r)$
with $A\cap B(x,r)\subset A(x,r)\subset B(x,2r)$
and $P(A(x,r),X)\le \rcapa_1(A\cap B(x,r),B(x,2r))+\eps$,
just as on page \pageref{Axr}.
If $r>0$ is small, then applying \cite[Theorem 3.1]{KKST}, we find balls $\{B_j=B(x_j,r_j)\}_j$
covering the interior of $A(x,r)$, with
\[
\mathcal H_{\infty}(A\cap B(x,r))
\le \sum_j \frac{\mu(B_j)}{r_j}
\le CP(A(x,r),X)
\le C(\rcapa_1(A\cap B(x,r),B(x,2r))+\eps)
\]
for a constant $C$ depending only on $C_d,C_P,\lambda$.
Letting $\eps\to 0$, we get the second inequality.
\end{proof}

\end{document}